


\documentclass{cpamart1}     
\received{Month 200X}       
\volume{000}
\startingpage{1}                      


\authorheadline{N. Berger, A. Drewitz, A.F. Ram\'irez}
\titleheadline{Polynomial Ballisticity Conditions}



\usepackage{graphicx}
\usepackage{url}
\usepackage{color}

\usepackage[mathscr]{euscript}		

\usepackage{dsfont}

\usepackage{psfrag}			
\usepackage{hyperref}

\usepackage{times}

\usepackage{caption} 

\usepackage{anysize}

\usepackage{enumerate}
\usepackage{enumitem} 



 \usepackage{mathptmx}



\newtheorem{theorem}{Theorem}[section]
\newtheorem{lemma}[theorem]{Lemma}
\newtheorem{corollary}[theorem]{Corollary}
\newtheorem{proposition}[theorem]{Proposition}
\newtheorem{conjecture}[theorem]{Conjecture} 
\newtheorem{claim}[theorem]{Claim} 

\theoremstyle{definition}
\newtheorem{definition}[theorem]{Definition}
\theoremstyle{remark}
\newtheorem{remark}[theorem]{Remark}



\newcommand{\floor}[1]{{\lfloor #1 \rfloor}}
\newcommand{\Z}{\ensuremath{\mathbb{Z}}}
\newcommand{\N}{\ensuremath{\mathbb{N}}}
\newcommand{\R}{\ensuremath{\mathbb{R}}}

\newcommand{\E}{\ensuremath{\mathbb{E}}}
\renewcommand{\P}{\ensuremath{\mathbb{P}}}

\newcommand{\Pbox}{\ensuremath{(\mathcal P)}}
\newcommand{\Pasymp}{\ensuremath{(\mathcal P^*)}}

\newcommand{\degAss}{15d+5}

\newcommand{\largenessAss}{\exp \big \{ 100 + 4d (\ln \kappa)^2 \big\}}


\begin{document}                        


\title{Effective Polynomial Ballisticity Conditions\\
for Random Walk in Random Environment}


\author{Noam Berger}{The Hebrew University of Jerusalem}
\author{Alexander Drewitz}{ETH Z\"urich}
\author{Alejandro F. Ram\'irez}{Pontificia Universidad Cat\'{o}lica de Chile}





\begin{abstract}
The conditions $(T)_\gamma,$ $\gamma \in (0,1),$ which have been
introduced by Sznitman in 2002, have had a significant impact on research in random walk in random environment.
Among others, these conditions entail a ballistic behaviour as well as an invariance principle.
They require the stretched exponential decay of certain slab exit
probabilities for the random walk under the averaged measure and are asymptotic in nature.

The main goal of this paper is to show that in all relevant dimensions (i.e., $d \ge 2$),
in order to establish the conditions $(T)_\gamma$,
it is actually enough to check a corresponding condition $\Pbox{}$ of polynomial type.
In addition to only requiring an a priori weaker decay of the corresponding slab exit probabilities than $(T)_\gamma,$
another advantage of the condition $(\mathcal{P})$ is that it is effective in the sense that it can be checked
on finite boxes.

In particular, this extends the conjectured equivalence
of the conditions $(T)_\gamma,$ $\gamma \in (0,1),$ to all relevant dimensions.
\end{abstract}

\maketitle   






\section{Introduction  and statement of the main result Theorem \ref{thm:PolImpliesStretched}\\ (Polynomial decay is enough)}

\subsection{Introduction}

Random walk in random environment (RWRE) is a generalisation of simple random walk which serves as a model for
describing transport processes
in inhomogeneous media. Its study has originally been motivated
by its role as a toy model in the replication of DNA chains as well as by the investigation of phase transitions in alloys
(in particular the growth of crystals)
in the late 60's and early 70's of the last century, see e.g. Chernov \cite{Ch-67} and Temkin \cite{Te-72}.
In addition, the model is related to Anderson's tight-binding model for disordered electron systems
as well as to a deterministic motion among random scatterers (such as the Lorentz gas, see Sina{\u\i} \cite{Si-82b}).
Furthermore, it serves as a theoretical model exhibiting $1/f$-noise --- a phenomenon
frequently occurring in physics but hard to establish in theoretical models (see Marinari et al. \cite{MaPaRuWi-83}).

The model has attracted significant mathematical attention and has undergone a major
development during the last decades, establishing results on limiting velocities, as well as diffusive and
non-diffusive limiting laws, for example.

In particular, the model exhibits appealing phenomena not present in simple random walk. For instance,
the
question of whether RWRE exhibits diffusive behaviour has attracted considerable attention, and in fact in \cite{Si-82},
Sina{\u\i} showed that in a standard one-dimensional setting, RWRE $(X_n)$ has fluctuations of
scale $(\log n)^2$ only, in contrast to the diffusive scale $\sqrt{n};$
see Kesten, Kozlov and Spitzer \cite{KeKoSp-75} for further results in this direction
as well as Bricmont and Kupiainen \cite{BrKu-91} (and references therein)
also for a discussion of the multi-dimensional situation, where
understanding is still far from complete.

As another intriguing example, consider
for an element $l \in \mathbb{S}^{d-1}$ of the $d-1$-dimensional unit sphere
in $\R^d,$ the event $A_l := \{X_n \cdot l = \infty\}$ of \emph{transience in direction $l.$}
Then Kalikow's zero-one law states that $P_0(A_l \cup A_{-l}) \in \{0,1\}$ (cf. Kalikow \cite{Ka-81}, Sznitman and Zerner
\cite{SzZe-99}, as well as
Zerner and Merkl \cite{ZeMe-01}), where $P_0$ is the averaged probability defined in \eqref{eq:averagedProb} below;
however, in dimensions larger than two
it is not known whether $P_0(A_l) \notin \{0,1\}$ can occur or whether a corresponding zero-one law holds for $P_0(A_l)$ also.

Two of the main 
difficulties in investigating RWRE arise from
the fact that under the averaged measure, the walk
is not Markovian anymore as well as from its strongly non-self-adjoint character.
As a consequence, the power of spectral theoretic tools is of limited scope only.

In particular, coming back to the above-named difficulties in understanding the
 higher-dimensional situation, there is still no handy criterion
to  \emph{characterise} the situations in which the walk exhibits a non-vanishing limiting velocity (i.e. ballisticity).
However, the conditions $(T)_\gamma,$ $\gamma \in (0,1],$ introduced
by Sznitman in \cite{Sz-01} and \cite{Sz-02} have proven
to be useful in deriving many interesting results concerning the ballistic and diffusive behaviour of RWRE.

\subsection{Basic notation and known results}

In order to be more precise, we now give a short introduction to the model, thereby fixing some of the notation we employ.
We use $\Vert \cdot \Vert_1$ for the $1$-norm
and $\vert \cdot \vert$ for the absolute value.
By $\mathcal M_d$ we denote  the space of probability measures on the measurable space
${(\{e \in \Z^d : \Vert e \Vert_1 = 1\}, \mathcal A)}$
of canonical unit vectors, with $\mathcal A$ denoting the power set of ${\{e \in \Z^d : \Vert e \Vert_1 = 1\}},$
and we set $\Omega := (\mathcal M_d)^{\Z^d}.$
Elements of $\Omega$ will be referred to as {\it environments}, and for any ${\omega=(\omega(x,\cdot))_{x \in \Z^d} 
\in\Omega}$
one can consider a Markov chain $(X_n)_{n\in\N}$ with transition probabilities from $x$ to $x+e$
given by $\omega(x,e)$ if $\Vert e \Vert_1 = 1,$ and $0$ otherwise.
 We denote by $P_{x,\omega}$ the law of this Markov chain conditional on $\{X_0 = x\}.$
 By $\mathcal F$ we will denote the $\sigma$-algebra on $\mathcal M_d$ induced through the Borel-$\sigma$-algebra on $\R^{2d}$
(where elements of $\mathcal M_d$ are identified with the
elements of $\R^{2d}$ with non-negative entries summing up to $1$).
Furthermore, to account for the randomness of the environments, 
\begin{enumerate} [label={\bf (IID)}, ref={\bf (IID)}]
 \item \label{item:IIDassumption}
we assume $\P$ to be a probability measure on $(\Omega, \mathcal F^{\Z^d})$
such that
 the coordinates $(\omega(x,\cdot))_{x \in \Z^d}$ of the environment $\omega$
are independent identically distributed under $\P$.
\end{enumerate}
In this context, $\P$ is  called \emph{elliptic},
if $\P(\min_{\Vert e \Vert_1 =1} \omega(0,e)>0)=1,$ and it is called \emph{uniformly elliptic} if
there is a constant $\kappa>0$
such that 
$
\P(\min_{\Vert e \Vert_1 =1} \omega(0,e)\ge\kappa)=1.
$
For $\omega$ chosen accordingly to $\P,$ we
refer to $P_{x,\omega}$ as the {\it quenched law} of the RWRE starting from $x,$ and correspondingly
we define the {\it averaged} (or {\it annealed}) law of the RWRE by 
\begin{equation} \label{eq:averagedProb}
 P_x:=\int_\Omega P_{x,\omega} \,\P({\rm d}\omega).
\end{equation}
As mentioned above, by  $\mathbb S^{d-1}$ we denote the $(d-1)$-dimensional unit-sphere in $\R^d.$
Given a direction $l\in\mathbb S^{d-1},$ one refers to the RWRE as being  {\it transient
in the direction $l$} if
$$
P_{0}\Big(\lim_{n\to\infty} X_n\cdot l=\infty\Big)=1,
$$
and as being {\it ballistic in the direction $l$}
if $P_0$-a.s.
$$
\liminf_{n\to\infty}\frac{X_n\cdot l}{n}>0.
$$
In this context, the case $d=1$ has been resolved by Solomon \cite{So-75} who has given concise and useful
characterisations of the situations in which the walk
exhibits transient and ballistic behaviour, respectively.

\begin{theorem} [\cite{So-75}] \label{thm:Solomon}
Let $d=1$ and $\rho(0):= \omega(0,1)/\omega(0,-1).$ If $\E \ln \rho(0)$ is well-defined (possibly taking the values
$\pm \infty$),
then the events
$
{\{\lim X_n = \infty\},}
$
$
{\{\liminf X_n = -\infty, \liminf X_n = -\infty\},}
$
and
$
{\{ \lim X_n = -\infty \},}
$
have full $P_0$-probability according to whether 
$
{\E \ln \rho(0) > 0,}
$
$
{\E \ln \rho(0) = 0,}
$
and
$
{\E \ln \rho(0) < 0,}
$
respectively. Similarly, writing $
{v^+ := (1-\E\rho)/(1+\E \rho)}$ and ${v^- := (\E(\rho^{-1})-1)/(1+\E (\rho^{-1})),}$ the events
$
\{\lim X_n/n = v^+\},
$
$
\{\lim X_n/n = 0\},
$
and
$
\{\lim X_n/n = v^- \},
$
have full $P_0$-probability according to whether 
$
\E \rho(0) > 0,
$
$
\E \rho(0) = 0,
$
and
$
\E \rho(0) < 0,
$
respectively.
\end{theorem}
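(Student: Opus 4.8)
The statement is the classical one-dimensional dichotomy, and the plan is to recover it from two ingredients: exact quenched formulas for hitting probabilities and expected hitting times, and the strong law of large numbers applied to the i.i.d.\ sequence $(\ln\rho_k)_{k\in\Z}$ furnished by \ref{item:IIDassumption}. Throughout I fix $\omega$, write $p_x:=\omega(x,1)$ and $q_x:=\omega(x,-1)=1-p_x$, so that $\rho_x=p_x/q_x$, and use that under $P_{x,\omega}$ the process $(X_n)$ is a nearest-neighbour Markov chain on $\Z$, hence reversible with edge conductances proportional to $\prod_{k=1}^{x}\rho_k$; since every conclusion below is obtained $\P$-a.s.\ in $\omega$, it integrates up to the asserted full-$P_0$-probability statements.

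First I would prove the transience trichotomy. A function $h$ is $\omega$-harmonic iff $h(x+1)-h(x)=\rho_x^{-1}(h(x)-h(x-1))$, so optional stopping of $h(X_n)$ at $T_a\wedge T_b$ (with $T_y:=\inf\{n\ge0:X_n=y\}$ and $a<0<b$) gives the gambler's-ruin identity
\[
P_{0,\omega}(T_b<T_a)=\frac{\sum_{i=0}^{-a-1}\prod_{k=1}^{i}\rho_{-k}}{\sum_{i=0}^{-a-1}\prod_{k=1}^{i}\rho_{-k}+\sum_{i=1}^{b}\prod_{k=0}^{i-1}\rho_{k}^{-1}},
\]
with empty products equal to $1$. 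Letting $a\to-\infty$, respectively $b\to+\infty$, shows that the walk a.s.\ reaches every positive level iff $R_-:=\sum_{i\ge1}\prod_{k=1}^{i}\rho_{-k}=\infty$, and a.s.\ reaches every negative level iff $R_+:=\sum_{i\ge1}\prod_{k=1}^{i}\rho_{k}^{-1}=\infty$; equivalently, by reversibility, recurrence of the quenched chain is the event $\{R_-=R_+=\infty\}$ (both effective resistances to infinity divergent), while on its complement the nearest-neighbour structure forces $|X_n|\to\infty$ with the sign fixed by which of $R_\pm$ is finite. Now $\tfrac1n\sum_{k=1}^{n}\ln\rho_{\pm k}\to\E\ln\rho(0)$ $\P$-a.s.\ by the SLLN, and here the hypothesis that $\E\ln\rho(0)$ be well-defined in $[-\infty,+\infty]$ is exactly what makes the partial sums have a definite asymptotic slope. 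A root-test (Borel--Cantelli) argument then gives, $\P$-a.s.: if $\E\ln\rho(0)>0$ then $R_+<\infty=R_-$, hence $X_n\to+\infty$; if $\E\ln\rho(0)<0$ then symmetrically $X_n\to-\infty$; and if $\E\ln\rho(0)=0$ then $R_+=R_-=\infty$, so the chain is recurrent, and since the potential $V(x):=-\sum_{k=1}^{x}\ln\rho_k$ is then a nondegenerate mean-zero random walk with $\limsup_xV(x)=+\infty$ and $\liminf_xV(x)=-\infty$, the walk visits arbitrarily large positive and negative sites, giving $\liminf X_n=-\infty$ and $\limsup X_n=+\infty$. This settles the first assertion.

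For the speed I would start with $\E\ln\rho(0)>0$, so that $R_+<\infty$ and the walk is transient to $+\infty$, $\P$-a.s. A first-step decomposition of the leftward excursions gives the closed form $E_{0,\omega}[T_1]=\sum_{i\ge0}(1/p_{-i})\prod_{k=0}^{i-1}\rho_{-k}^{-1}=1+2\sum_{i\ge0}\prod_{k=0}^{i}\rho_{-k}^{-1}$, whose $\P$-expectation, by \ref{item:IIDassumption}, is the geometric series $1+2\sum_{i\ge0}\bigl(\E(\rho(0)^{-1})\bigr)^{i+1}$; this equals $\frac{1+\E(\rho(0)^{-1})}{1-\E(\rho(0)^{-1})}$ when $\E(\rho(0)^{-1})<1$ and $+\infty$ otherwise, the finite value being $1/v^{+}$. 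To turn this into an annealed law of large numbers for $T_n$, I would write $T_n=\sum_{k=0}^{n-1}(T_{k+1}-T_k)$; the walk occupies exactly site $k$ at time $T_k$, so the strong Markov property gives $E_{0,\omega}[T_{k+1}-T_k\mid X_0,\dots,X_{T_k}]=E_{k,\omega}[T_{k+1}]$, whence $M_n:=T_n-\sum_{k=0}^{n-1}E_{k,\omega}[T_{k+1}]$ is a $P_{0,\omega}$-martingale, while $(E_{k,\omega}[T_{k+1}])_{k\ge0}$, being a one-sided function of the i.i.d.\ field, is stationary and ergodic, so $\tfrac1n\sum_{k<n}E_{k,\omega}[T_{k+1}]\to\E(E_{0,\omega}[T_1])$ $\P$-a.s.\ by Birkhoff's theorem. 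When this limit is finite, a martingale law of large numbers gives $M_n/n\to0$, so $T_n/n\to\E(E_{0,\omega}[T_1])$ and, inverting via $X_{T_n}=n$ and transience, $X_n/n\to v^{+}$, $P_0$-a.s.; when it is $+\infty$ (the case $\E(\rho(0)^{-1})\ge1$), a truncated-environment version of the estimate gives $T_n/n\to\infty$, i.e.\ $X_n/n\to0$. The case $\E\ln\rho(0)<0$ is the mirror image, producing $v^{-}$ when the corresponding mean is $<1$ and $0$ otherwise; and in the recurrent case $\E\ln\rho(0)=0$ the same excursion sums are a.s.\ infinite (zero drift), so $E_{0,\omega}[T_{\pm1}]=\infty$, forcing $T_{\pm n}/n\to\infty$ and hence $X_n/n\to0$. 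Matching these against the three regimes for the mean of $\rho(0)$ completes the proof.

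The step I expect to be the main obstacle is the transfer, in the speed argument, from the quenched/ergodic decomposition to an almost-sure statement about $T_n$: proving $M_n/n\to0$ when $E_{0,\omega}[T_1]$ has finite $\P$-mean but infinite second moment, and proving $X_n/n\to0$ in the borderline regime where the hitting times already have infinite mean, both of which call for a truncation (of $\rho_k^{-1}$, respectively of the environment) together with a monotone passage to the limit rather than a direct $L^2$ estimate. The quenched computations underlying the transience step and the formula for $E_{0,\omega}[T_1]$, by contrast, are routine gambler's-ruin and first-step algebra.
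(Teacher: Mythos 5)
The paper itself does not prove this theorem: it is quoted from Solomon's 1975 paper as a known result, so there is no in-paper argument for your proposal to be compared with. Your approach (gambler's ruin via harmonic functions and the strong law to settle transience/recurrence; the first-step excursion identity for $E_{0,\omega}[T_1]$, Birkhoff's ergodic theorem and a martingale LLN for the speed) is the classical route and is essentially correct, and you rightly flag the two places that genuinely need work — the martingale normalisation $M_n/n\to 0$ when $E_{0,\omega}[T_1]$ has finite $\P$-mean but no second moment, and $X_n/n\to 0$ in the regime where $\E\,E_{0,\omega}[T_1]=\infty$ — both of which Solomon and later accounts handle by truncating the environment and passing to the limit monotonically.

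One discrepancy deserves calling out explicitly rather than passing over. Your excursion-time computation gives $\E\,E_{0,\omega}[T_1]=\bigl(1+\E(\rho(0)^{-1})\bigr)/\bigl(1-\E(\rho(0)^{-1})\bigr)$ when $\E(\rho(0)^{-1})<1$, hence $v^{+}=\bigl(1-\E(\rho(0)^{-1})\bigr)/\bigl(1+\E(\rho(0)^{-1})\bigr)$; this is correct (test with $p\equiv 2/3$: it gives $1/3$, which is the drift). As quoted, however, the theorem states $v^{+}=(1-\E\rho)/(1+\E\rho)$ with $\rho(0)=\omega(0,1)/\omega(0,-1)$, which is the negative of the true speed in that example — the paper's statement has $\rho$ and $\rho^{-1}$ swapped in both $v^{\pm}$. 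Likewise the trichotomy conditions ``$\E\rho(0)>0$, $=0$, $<0$'' are vacuous since $\rho(0)>0$ always (they should read in terms of $\E\rho(0)^{\mp 1}$ compared with $1$), and the middle event in the transience trichotomy should be $\{\liminf X_n=-\infty,\ \limsup X_n=+\infty\}$, not the repeated $\liminf$. Your derivation silently proves the corrected statement; it would be worth noting this explicitly rather than leaving the reader to reconcile your formula with the one in the statement.
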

In particular, from this result one easily infers that
in $d=1,$ there exists uniformly elliptic RWRE that is transient but not ballistic to the right.
The picture is much more involved in dimensions larger than one, though.
In fact, there it has also been established that 
there exist elliptic RWRE in independent identically distributed environments which are
transient but not ballistic in a given direction, see for example Sabot and Tournier \cite{SaTo-11}.
On the other hand, however, even in the uniformly elliptic case there are still no useful 
characterisations of the situations in which RWRE
is transient or ballistic.  In order to facilitate redaction, we will abbreviate the condition of uniform ellipticity as  follows.
\begin{enumerate} [label={\bf (UE)}, ref={\bf (UE)}]
 \item \label{item:standardAssumptions1}
Let $\P$ be uniformly elliptic with ellipticity constant $\kappa > 0.$
\end{enumerate}
Then the following fundamental conjecture remains open.

\begin{conjecture} \label{conj:transBall}
Let $d \ge 2$ and
assume  \ref{item:IIDassumption} as well as \ref{item:standardAssumptions1} to hold.
Then RWRE which is transient with respect to all directions in an open subset of $\mathbb S^{d-1}$ is
necessarily ballistic.
\end{conjecture}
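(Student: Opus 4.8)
This statement (Conjecture \ref{conj:transBall}) is the central open problem in the theory of multi-dimensional ballistic RWRE, so what follows is a program along which a proof is expected to proceed, together with the point at which it currently breaks down rather than a complete argument.

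\medskip

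The plan is to route everything through Sznitman's ballisticity conditions and the main result of this paper. It is classical that if $(T)_\gamma$ holds for some $\gamma\in(0,1)$ and some direction $l$, then the walk is ballistic in direction $l$ (in fact it satisfies a law of large numbers with non-degenerate velocity and an averaged CLT). Moreover, by Theorem \ref{thm:PolImpliesStretched} it suffices, in every dimension $d\ge 2$, to verify the polynomial condition $\Pbox$ — an assertion about a single slab-exit probability on an explicitly sized \emph{finite} box — in order to deduce $(T)_\gamma$. Hence Conjecture \ref{conj:transBall} reduces to the implication: transience with respect to all directions in some open set $\mathcal O\subset\mathbb S^{d-1}$ $\Longrightarrow$ $\Pbox$ holds for some $l\in\mathcal O$. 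The first step is therefore to fix $l_0\in\mathcal O$ and exploit the openness: transience holds not merely along $l_0$ but along a whole cone of directions around it, and by Kalikow's zero--one law (together with Sznitman--Zerner, Zerner--Merkl) one gets, in combination with transience along directions spanning a basis of $\R^d$, that the walk is confined to a half-space-like region, with large backward excursions against $l_0$ occurring only rarely.

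\medskip

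The second step is a renormalization/coarse-graining scheme on boxes of sizes growing along a sequence $L_k$. For a box $B_{L}$ oriented along $l_0$, let $\rho_{L}$ be the probability that the walk exits $B_{L}$ through its back face (the one in direction $-l_0$). The standard multi-scale estimate yields a recursion of the form $\rho_{L_{k+1}}\le C\,(L_{k+1}/L_k)^{d}\,\rho_{L_k}^{2}+\varepsilon_k$, where the error $\varepsilon_k$ accounts for the walk escaping through the lateral faces and is controlled precisely because of transience in the directions neighbouring $l_0$. If one can \emph{seed} this recursion with a single sufficiently small $\rho_{L_{k_0}}$ on a finite box, the iteration propagates it to (stretched-)exponential, hence certainly polynomial, decay — which is exactly $\Pbox$. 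The whole advantage of working with $\Pbox$ rather than directly with $(T)_\gamma$ is that $\Pbox$ only demands a \emph{polynomial}, and moreover \emph{effective}, smallness, so a comparatively weak seed suffices; this is the content of Theorem \ref{thm:PolImpliesStretched}.

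\medskip

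The genuinely hard part — and the reason the conjecture is still open — is producing that seed: extracting \emph{any} quantitative smallness of a finite-box slab exit probability from the purely qualitative hypothesis of transience. Transience guarantees only that $X_n\cdot l_0\to\infty$ $P_0$-a.s.; a priori it is consistent with environments possessing arbitrarily strong finite "traps" that make the (quenched or averaged) probability of a backward slab crossing close to $1$ on every fixed scale. Ruling such traps out under only \ref{item:IIDassumption} and \ref{item:standardAssumptions1} is the crux. The most promising angle is the regeneration structure: transience produces infinitely many regeneration times $P_0$-a.s., and one would try to show that the tail of the first regeneration displacement in direction $l_0$ cannot be heavy without contradicting transience in the nearby directions of $\mathcal O$; but turning this into a quantitative estimate on a finite box, uniformly over the environment law, is exactly where current methods fall short. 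In summary, the present paper makes the second and third steps above completely effective in all $d\ge 2$, reducing the conjecture to this single seeding problem.
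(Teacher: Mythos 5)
The statement you were asked about is labelled a \emph{conjecture} in the paper precisely because it has no proof there; the paper only claims that Theorem \ref{thm:PolImpliesStretched} is ``a major step towards proving Conjecture \ref{conj:transBall}.'' You correctly recognised this and did not pretend to close the gap, which is the right call.

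Your outline is consistent with how the literature (and this paper) views the problem: Theorem \ref{thm:PolImpliesStretched} reduces ballisticity to checking the finite-box polynomial condition $\Pbox_M$, and the remaining obstruction is exactly the one you name --- extracting a quantitative seed estimate on a finite box from the purely qualitative hypothesis of directional transience under \ref{item:IIDassumption} and \ref{item:standardAssumptions1} alone. One small caution on your second step: the recursion $\rho_{L_{k+1}}\lesssim \rho_{L_k}^2$ you sketch is morally what drives both Sznitman's effective criterion and this paper's renormalisation (Proposition \ref{prop:likelygood}), but in both cases the iteration is set up on carefully shaped boxes (much wider than long, with controlled lateral error terms) and the doubling exponent comes from independence across non-intersecting sub-boxes, not merely from box counting; the lateral-escape error $\varepsilon_k$ is \emph{not} controlled by transience alone but requires the induction hypothesis on side-exit probabilities, cf.\ the proof of Proposition \ref{prop:GoodBoxRightExit}. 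So even the ``easy'' part of your program uses more structure than transience in a cone of directions. The essential missing idea remains the seeding step, and you have identified it accurately.
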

As hinted at above, some partial progress has been made towards the resolution of
this conjecture by studying RWRE satisfying
the conditions $(T)_\gamma.$
To rigorously formulate this condition,
let $L \ge 0$ and $l \in \mathbb S^{d-1}$ an element of the unit sphere. Then we write
\begin{equation} \label{eq:exitTimeOneDefX}
 H_L^l := \inf\{ n \in \N_0 : X_n \cdot l > L\}
\end{equation}
for the first entrance time of $(X_n)$ into the half-space $ \{x \in \Z^d : x \cdot l > L\},$
and where ${\N_0 = \{0, 1, 2, \ldots\}.}$

\begin{definition}[\cite{Sz-02}] \label{def:Tgamma}
Let $\gamma\in (0,1]$ and $l\in\mathbb S^{d-1}$.
We say that \emph{condition $(T)_\gamma$} is satisfied with respect to $l$
(written $(T)_\gamma|l$ or $(T)_\gamma$) if for each $l'$ in a neighbourhood of $l$ and
each $b>0$ one has that

\begin{align*}
\limsup_{L \to \infty} L^{-\gamma} \ln P_0 \big( H_L^{l'} > H_{bL}^{-l'} \big) < 0.
\end{align*}
We say that \emph{condition $(T')$} is satisfied with respect to $l$ (written
$(T')|l$ or $(T')$), if for each $\gamma\in (0,1)$, condition $(T)_\gamma|l$
is fulfilled.
\end{definition}
In the following we will shortly explain the importance of the conditions $(T)_\gamma.$

It is known that in dimensions $d\ge 2$ and assuming \ref{item:IIDassumption} and \ref{item:standardAssumptions1}, the validity of the condition
$(T')$ already implies the existence of a deterministic $v \in \R^d \backslash\{0\}$ such that
$P_0$-a.s. $\lim_{n \to \infty} \frac{X_n}{n} = v,$ as well as an invariance principle for the
RWRE so that under the averaged law $P_0$,
$$
B^n_\cdot := \frac{X_{\floor{\cdot n}}-\floor{\cdot n}v}{n}
$$
converges in distribution to a Brownian motion in the Skorokhod space
$D([0,\infty), \R^d)$ as $n \to \infty;$ see for instance Theorem 4.1 in Sznitman \cite{Sz-04}
for further details. 
Recently, the condition $(T')$ has also been used to obtain further knowledge about large deviations for RWRE, see
e.g.  Berger \cite{Be-09}.

While $(T)_\gamma$ a priori is a stronger condition the larger $\gamma$ is,
it has been shown in Sznitman \cite{Sz-02} by a detour along the so-called \emph{effective criterion} that for $d \geq 2,$
assuming \ref{item:IIDassumption} and \ref{item:standardAssumptions1},
the conditions
$(T)_\gamma$ are equivalent for all $\gamma \in (\frac12,1).$ This equivalence has been further improved in 
Drewitz and Ram\'irez \cite{DrRa-09b} to
all $\gamma \in (\gamma_d,1)$ for some constant $\gamma_d \in (0.366,0.388).$ For dimensions larger or equal to four,
it has been established in Drewitz and Ram\'irez \cite{DrRa-10}
by different methods
that the conditions $(T)_\gamma$ are actually equivalent for all $\gamma \in (0,1).$
Note that in \cite{Sz-04} it has been conjectured by Sznitman that for any $d \ge 2$ fixed, the conditions $(T)_\gamma$ are equivalent for all $\gamma 
\in (0,1],$ and we are making another step towards the resolution of this conjecture as a corollary (see Corollary \ref{cor:equiv}) to our main result
Theorem \ref{thm:PolImpliesStretched}.

\subsection{Main result}

The principal goal of this paper is to significantly weaken the condition that has to be checked in order to establish $(T'),$
and hence ballisticity. For this purpose
we set
\begin{equation} \label{eq:c0Def}
c_0 := \largenessAss < \infty,
\end{equation}
and introduce the following definition.
%

\begin{definition} \label{def:PCond}
Let $M > 0$ and $l \in \mathbb S^{d-1}.$
 We say that {\em condition $\Pasymp_M \vert l$} is satisfied 
with respect to $l$
(also written $\Pasymp_M$ or $\Pasymp$ at times)
if 
the following holds: 
For all
$b > 0$ and all $l' \in \mathbb S^{d-1}$ in some neighbourhood of $l,$ one has that
%
%
\begin{equation} \label{eq:PCond}	
\limsup_{L \to \infty} L^{M} P_0 \big( H_{bL}^{-l'} < H_{L}^{l'} \big) = 0,
\end{equation}
\end{definition}

\begin{remark}
\begin{enumerate}
\item
 In fact, throughout the whole paper the condition $\Pasymp$ of Definition \ref{def:PCond}
can be replaced by the weaker condition
$\Pbox$ given in Definition \ref{def:Pbox}. The condition $\Pbox$ of that definition is also more adequate to check in examples since it is effective
in the sense that it can be verified on finite boxes already.
%

Note, however, that condition $\Pasymp$ is better suited to illustrate the relations with Sznitman's conditions $(T)_\gamma$.

The reason for giving Definition \ref{def:PCond} of $\Pasymp$ here instead of $\Pbox$
 is that the latter requires quite some notation which will only be introduced
later on. As will be shown in Lemma \ref{lem:asympBox}, condition $\Pasymp$ implies $\Pbox.$ Until the formal introduction of 
condition $\Pbox$ in Definition \ref{def:Pbox}, to facilitate reading we will state both assumptions, condition $\Pbox$
 and condition $\Pasymp,$ in results.

\item 
Note that if $\Pasymp_M \vert l$ holds, then
$\Pasymp_M \vert l'$ holds for all directions $l'$ in a neighbourhood of $l$ in $\mathbb S^{d-1}$  also.
\end{enumerate}
\end{remark}

It is straightforward that condition $(T') \vert l$ implies $\Pasymp_M \vert l$ for any $M \in (0,\infty).$
The main result of the paper states that the converse is true also, provided that $M$ is large enough.
\begin{theorem}[Polynomial decay is enough] \label{thm:PolImpliesStretched}
Assume \ref{item:IIDassumption} and \ref{item:standardAssumptions1} to be fulfilled.
 Let $l \in \mathbb{S}^{d-1}$ and assume that $\Pasymp_M \vert l$ or $\Pbox_M \vert l$ holds for some $M> \degAss.$
Then $(T') \vert l$ holds.
\end{theorem}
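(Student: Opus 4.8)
The plan is to run a multiscale / renormalization argument that upgrades the polynomial decay in $\Pasymp_M$ (equivalently $\Pbox_M$) to the stretched-exponential decay required by $(T')$. The natural route is through Sznitman's \emph{effective criterion}: it suffices to produce a single finite box $B$, with side lengths of order $L$ and $\tilde L \sim L^3$ (say), on which a suitable weighted slab-exit probability is below the explicit threshold appearing in the effective criterion; once that is established for one scale, Sznitman's machinery automatically gives $(T)_\gamma$ for $\gamma$ close to $1$, hence $(T')$. So the real work is a seed estimate: starting from the hypothesis that $L^M P_0(H_{bL}^{-l'} < H_L^{l'}) \to 0$, show that on some large but finite scale the relevant box quantity is small enough to trigger the criterion.

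The key steps, in order: (i) Fix the direction $l$ and, using the second part of the Remark, work with a fixed $l'$ in a neighbourhood of $l$; by hypothesis there is $L_0$ so that $P_0(H_{bL}^{-l'} < H_L^{l'}) \le L^{-M}$ for all $L \ge L_0$ and all the relevant $b$. (ii) Tilt/atypicality control: decompose the bad slab-exit event according to the number of times the walk backtracks a distance of order $L$ against direction $l'$; each such backtrack, by the strong Markov property and a regeneration-type surgery, costs a factor bounded by the annealed probability $P_0(H_{bL}^{-l'}<H_L^{l'})$, up to the uniform ellipticity price $\kappa^{cL}$ for bridging deterministic pieces. This is where \ref{item:standardAssumptions1} enters quantitatively, and it is why $M$ must exceed $\degAss$: the combinatorial/entropic factors (number of ways to place backtracks, volume of boxes, the $\kappa^{-O(d)L}$ corrections, etc.) are all polynomial in $L$ of degree at most $O(d)$, so a net power of $L^{-M}$ with $M > 15d+5$ still beats them with room to spare, pushing the box quantity below the fixed numerical constant in the effective criterion. (iii) Feed the resulting bound into the effective criterion (Sznitman \cite{Sz-02}) to obtain $(T)_\gamma|l$ for some $\gamma > 1/2$; (iv) invoke the known equivalence of the $(T)_\gamma$ for $\gamma \in (1/2,1)$ to conclude $(T')|l$.

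The main obstacle is step (ii): turning an annealed, asymptotic-in-$L$ polynomial bound into a statement about a \emph{single} finite box that is strong enough for the effective criterion. The difficulty is that the effective criterion is not merely "some slab quantity is small" but "a specific quantity, with an explicit constant depending on $d$ and $\kappa$, is below an explicit threshold", so one has to be scrupulous about every polynomial-in-$L$ and every $\kappa$-power picked up in the surgery — this is exactly what the choices $c_0 = \largenessAss$ and the degree $\degAss$ are calibrated to absorb. A secondary, more technical obstacle is handling the loss of the Markov property under the annealed measure when performing the backtracking decomposition: one needs a careful conditioning/regeneration argument (or a coupling that decouples the pieces of environment visited by the excursions) to ensure the backtrack costs genuinely multiply rather than merely bound a single term. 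I expect the bulk of the paper to be devoted to making this finite-box seed estimate rigorous and explicit.
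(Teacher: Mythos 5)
Your high-level plan is right in one respect and wrong in a more consequential one: the route indeed goes through Sznitman's effective criterion (Theorem \ref{Sz24}), but the way you propose to get there cannot work, and it also misses the second half of the argument.

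The effective criterion is a bound on $\E\rho_{\mathcal B}^{a}$, not on the annealed exit probability $P_0(H_{\partial B}\neq H_{\partial_+B})$. These are very different objects. The ratio $\rho_{\mathcal B}(\omega)=P_{0,\omega}(\text{bad exit})/P_{0,\omega}(\text{good exit})$ blows up on environments where the denominator is atypically small, so $\E\rho^a$ is driven by the \emph{quenched} lower tail of $P_{0,\omega}(\text{good exit})$, which an annealed polynomial bound $P_0(\text{bad exit})\le L^{-M}$ says essentially nothing about. To tame that tail one is forced to take the exponent $a$ tiny (the paper takes $a=L^{-\alpha(L)}$ with $\alpha(L)\to 0$), and then a polynomial bound is useless: $P_0(\text{bad exit})^a = L^{-Ma}=L^{-M L^{-\alpha(L)}}\to 1$. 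So your step (ii) as stated --- beat the $O(d)$-degree entropic factors with $L^{-M}$ and feed directly into the criterion --- fails already at the level of the dominant term $\mathcal E_0$ in the decomposition of $\E\rho^a$. What the paper actually does is a two-step upgrade: Proposition \ref{prop:superPolDecay} first promotes the polynomial hypothesis to the sharpened bound $(T)_{\gamma_L}$, i.e.\ $P_0(\text{bad exit})\le\exp\{-L^{(1+o(1))\ln 2/\ln\ln L}\}$, via a quenched multi-scale renormalization (good/bad boxes at scales $N_{k+1}=3(N_0+k)^2N_k$, Propositions \ref{prop:likelygood} and \ref{prop:GoodBoxRightExit}). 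That sharpened decay is exactly what makes $P_0(\text{bad exit})^a$ small even when $a=L^{-\alpha(L)}$.

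You are also missing the second essential ingredient entirely: Proposition \ref{prop:quenchedExit}, the \emph{atypical quenched exit estimates}, which bound $\P\big(P_{0,\omega}(H_{\partial B}=H_{\partial_+B})\le\frac12 e^{-c_1 L^{\beta}}\big)$ by a super-polynomially small quantity. This is what controls the intermediate slices $\mathcal E_j$ in the decomposition \eqref{decomp} of $\E\rho^a$; without it the contribution of environments with unusually small forward-exit probability cannot be ruled out, regardless of how good the annealed estimate is. It is proved by a separate one-step renormalization on parallelograms $\mathcal R_n$ and a binomial concentration argument, taking $(T)_{\gamma_L}$ as input. Finally, your proposed mechanism for step (ii) --- a backtracking decomposition of the annealed slab-exit event with regeneration surgery --- is not what the paper does and is precisely the place where the loss of the Markov property under $P_0$ bites hardest; the paper avoids it by doing all the renormalization under the \emph{quenched} measure $P_{x,\omega}$, where the walk is Markov, and only averaging over $\P$ at the level of good/bad box counts. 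Your step (iv) is also unnecessary: the effective criterion implies $(T')$ directly by Theorem \ref{Sz24}, with no detour through $\gamma>1/2$.
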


The importance of this result also stems from the multitude of results that so far have been known to hold
under the condition $(T')$ only. Using Theorem \ref{thm:PolImpliesStretched},
it is now sufficient to establish the polynomial decay of the exit probabilities corresponding to $\Pbox_M$ or $\Pasymp_M$ instead
of the a priori stronger stretched exponential decay
of $(T)_\gamma.$
 In particular, Theorem \ref{thm:PolImpliesStretched} can be seen as a major step towards proving
Conjecture \ref{conj:transBall}.

In addition, in contrast to the conditions $(T)_\gamma,$ the condition $\Pbox_M$ can be checked on
finite boxes (without a detour along an analogue to the \emph{effective criterion} of \cite{Sz-02}),
which emphasises its effective character, cf. Definition \ref{def:Pbox}.

Furthermore, combining Theorem \ref{thm:PolImpliesStretched}
with the above remark that $(T') \vert l$ implies $\Pasymp_M \vert l,$
we directly obtain the following corollary.
\begin{corollary} \label{cor:equiv}
Assume \ref{item:IIDassumption} and \ref{item:standardAssumptions1} to be fulfilled.
Then for any $l \in \mathbb S^{d-1},$ the conditions $(T)_\gamma \vert l,$
$\gamma \in (0,1),$ are equivalent.
\end{corollary}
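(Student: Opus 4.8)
\emph{Proof sketch.} The plan is to read off the corollary directly from Theorem~\ref{thm:PolImpliesStretched}, the only extra input being the elementary remark that a stretched-exponential bound dominates every polynomial. Since for $\gamma_1,\gamma_2\in(0,1)$ the equivalence $(T)_{\gamma_1}\vert l\Leftrightarrow (T)_{\gamma_2}\vert l$ follows as soon as one knows that $(T)_\gamma\vert l\Leftrightarrow (T')\vert l$ for every single $\gamma\in(0,1)$, and since the implication $(T')\vert l\Rightarrow (T)_\gamma\vert l$ is immediate from Definition~\ref{def:Tgamma}, the only thing to prove is $(T)_\gamma\vert l\Rightarrow (T')\vert l$ for an arbitrary fixed $\gamma\in(0,1)$.

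So I would fix $\gamma\in(0,1)$, assume $(T)_\gamma\vert l$, and first check that this yields $\Pasymp_M\vert l$ for \emph{every} $M\in(0,\infty)$. Let $b>0$ and let $l'$ range over the neighbourhood of $l$ provided by Definition~\ref{def:Tgamma}. By that definition there are $c=c(b,l')>0$ and $L_0=L_0(b,l')$ with $P_0\big(H_L^{l'}>H_{bL}^{-l'}\big)\le\exp\{-cL^\gamma\}$ for all $L\ge L_0$. As $\{H_L^{l'}>H_{bL}^{-l'}\}$ and $\{H_{bL}^{-l'}<H_L^{l'}\}$ denote the same event, for any $M>0$
\[
L^{M}\,P_0\big(H_{bL}^{-l'}<H_L^{l'}\big)\;\le\;L^{M}\exp\{-cL^\gamma\}\;\longrightarrow\;0\qquad(L\to\infty),
\]
because $\gamma>0$. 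Since $b>0$ and $l'$ were arbitrary, $\Pasymp_M\vert l$ holds for every $M\in(0,\infty)$.

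It then remains to invoke Theorem~\ref{thm:PolImpliesStretched}: picking some $M>\degAss$, the condition $\Pasymp_M\vert l$ just established forces $(T')\vert l$. Together with the previous paragraph this gives $(T)_\gamma\vert l\Rightarrow (T')\vert l$ for each $\gamma\in(0,1)$, and hence the asserted equivalence of all the conditions $(T)_\gamma\vert l$, $\gamma\in(0,1)$. There is essentially no obstacle in this argument: the whole substance is carried by Theorem~\ref{thm:PolImpliesStretched}, and the step converting $(T)_\gamma$ into the polynomial condition $\Pasymp_M$ for arbitrarily large $M$ is the trivial observation above.
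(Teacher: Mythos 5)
Your argument is correct and matches the paper's own reasoning: the paper likewise obtains the corollary by noting that stretched-exponential decay dominates any polynomial (so any single $(T)_\gamma\vert l$ with $\gamma\in(0,1)$ yields $\Pasymp_M\vert l$ for every $M$, in particular for some $M>\degAss$), and then invoking Theorem~\ref{thm:PolImpliesStretched} to recover $(T')\vert l$. The only cosmetic difference is that the paper phrases the elementary implication in terms of $(T')\Rightarrow\Pasymp_M$ rather than the more directly useful $(T)_\gamma\Rightarrow\Pasymp_M$, but the content is identical.
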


\subsection{Some further notation} \label{subsec:Not}
For $k \in \N,$ we define the
canonical left shift
\begin{equation} \label{eq:shiftDef}
\theta_k: \R^\N \ni (x_n)_{n \in \N} \mapsto (x_{k+n})_{n \in \N} \in \R^\N.
\end{equation}
Throughout the rest of the paper, $C$ will denote differing
strictly positive and finite constants. Their precise values
may change from one side of an inequality to the other; however, in particular, they do not depend on the parameter $L$ that will
be employed frequently in the paper.
If we want to refer to constants that may depend on the dimension and the ellipticity constant $\kappa$
but otherwise are absolute, we put indices as in ${c}_3$ for example.

For a $\Z^d$-valued discrete time stochastic process $(Y_n)$ 
and $A \subset \Z^d$ we define the entrance time into $A$ as
\begin{equation} \label{eq:entranceTime}
H_A^Y:=H_A(Y):= \inf \big \{ n \in \N_0 : Y_n \in A \big \},
\end{equation}
and for singletons $A=\{z\}$ we denote $H_z (Y) := H_{\{z\}}(Y).$
Furthermore, for $l \in \mathbb{S}^{d-1}$ and $L \in \R,$ in accordance with \eqref{eq:exitTimeOneDefX}, we define the entrance time
\begin{equation} \label{eq:exitTimeOneDef}
 H_L^l(Y) := \inf \big\{ n \in \N_0 : Y_n \cdot l > L \big\},
\end{equation}
into the half-space $\{ x \in \Z^d : x \cdot l > L\},$
with the usual convention that $\inf \emptyset := \infty.$
Similarly, for a set $A$ the exit time is defined as
$$
T_A^Y:=T_A(Y):= \inf \big \{ n \in \N_0 : Y_n \notin A \big \},
$$
When referring to the canonical RWRE $(X_n)$ that we will be dealing with, then for the sake of simplicity we will often
omit $X$ as an argument of the entrance and exit times.

For any subset $A \subset \Z^d$ its (outer) boundary $\partial A$ is defined to be
\begin{equation} \label{eq:bdryDef}
 \partial A := \big \{x \in \Z^d \backslash A : \exists y \in A \text{ such that } \Vert x-y \Vert_1 = 1 \big \}.
\end{equation}

For $l\in\mathbb S^{d-1}$
we will
use the notation
\begin{equation} \label{eq:pi}
\pi_l: \R^d \ni x \mapsto (x \cdot l) \,l  \in \R^d
\end{equation}
to denote the orthogonal projection on the space $\{\lambda l : \lambda \in \R\}$
as well as
\begin{equation}\label{eq:piBot}
\pi_{l^\bot} : \R^d \ni x \mapsto x - \pi_l(x) \in \R^d
\end{equation}
for the projection on the corresponding orthogonal subspace.

Now for a generic $l_1 := l \in \mathbb S^{d-1}$
we choose and fix for the remaining part of this article
$l_2, l_3, \ldots, l_d$ arbitrarily such that in combination with $l_1$ these vectors form
 an orthonormal basis of $\R^d.$
Furthermore, for $L > e^e$ define
$$
\mathcal{D}_L^l := \Big \{ x \in \Z^d : -L \leq x  \cdot l \leq 10L, \vert x \cdot l_j \vert 
\leq  \frac{L^{3} \ln \ln L}{\ln L} \; \; \forall \; 2 \le j \le d \Big \}
$$
as well as its frontal boundary part
$$
\partial_+ \mathcal{D}_L^l := \Big \{ x \in \partial \mathcal D_L^l : x \cdot l > 10L, \vert x \cdot l_j \vert 
\leq  \frac{L^{3} \ln \ln L}{\ln L} \; \; \forall \; 2 \le j \le d \Big \}.
$$
We introduce the following condition \eqref{eq:superPolDecay} for further reference. Its validity under $\Pbox_M \vert l'$
and $\Pasymp_M \vert l',$ respectively, will be
the content of Proposition \ref{prop:superPolDecay}.
\begin{align}
\begin{split}\label{eq:superPolDecay}
&\text{For }l' \in \mathbb S^{d-1} \text{ one has }
P_0\Big (H_{\partial \mathcal D_L^l} < H_{\partial_+ \mathcal D_L^l} \Big )
\leq \exp \Big \{ -L^{\frac{(1+o(1))\ln 2}{\ln \ln L}} \Big\},
\quad  \text{as } L \to \infty.
\end{split}
\end{align}

\begin{remark} \label{rem:gammaLDef}
If \eqref{eq:superPolDecay} holds,  in correspondence to condition $(T)_\gamma$ of Definition \ref{def:Tgamma}, we write
\begin{equation*} 
\gamma_L := \frac{\ln 2}{\ln \ln L}
\end{equation*}
to denote
the \emph{effective} $\gamma.$
\end{remark}

\begin{definition}
If
 $\eqref{eq:superPolDecay}$ holds,  then we say that
condition $(T)_{\gamma_L} \vert l$
is fulfilled.
\end{definition}

\section{Proof of Theorem \ref{thm:PolImpliesStretched} (Polynomial decay is enough)} \label{sec:Strategy}

In 
Subsection \ref{subse:auxRes} we state two auxiliary results that
will be helpful in the proof of Theorem \ref{thm:PolImpliesStretched} in Subsection \ref{subsec:proofMainThm}.

\subsection{Auxiliary results (Propositions \ref{prop:superPolDecay} and 
\ref{prop:quenchedExit})} \label{subse:auxRes}

In this subsection we state two results that play a key role in proving Theorem \ref{thm:PolImpliesStretched}.
Their proofs will be the subject of Sections \ref{sec:renormalisation} and \ref{sec:proofPropQuenchedExit}.

\begin{proposition}[Sharpened averaged exit estimates] \label{prop:superPolDecay}
Assume \ref{item:IIDassumption} and \ref{item:standardAssumptions1} to be fulfilled.
Let $M>\degAss,$ $l\in\mathbb S^{d-1}$ and assume that condition
  $\Pasymp_M|l$ or $\Pbox_M \vert l$ 
is satisfied. Then 
$(T)_{\gamma_L} \vert l$ holds.
\end{proposition}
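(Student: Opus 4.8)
The plan is to run a renormalisation (multi-scale) scheme in the spirit of Sznitman's work on $(T)_\gamma$, but starting from the weak polynomial input $\Pasymp_M$ (equivalently $\Pbox_M$ by Lemma \ref{lem:asympBox}) and bootstrapping it up to the stretched-exponential-type decay \eqref{eq:superPolDecay} with the effective exponent $\gamma_L = \ln 2 / \ln\ln L$. First I would fix a base scale $L_0$ and define a sequence of scales $L_{k+1} = c\, L_k^{\alpha}$ for a suitable $\alpha>1$ (so $\ln L_k$ grows geometrically, which is exactly what produces the $1/\ln\ln L$ in the exponent after $O(\ln\ln L)$ iterations), together with the associated boxes $\mathcal D_{L_k}^{l}$ and their frontal boundaries $\partial_+\mathcal D_{L_k}^{l}$. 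The core is a recursive inequality: the probability $p_k := P_0(H_{\partial\mathcal D_{L_k}^l} < H_{\partial_+\mathcal D_{L_k}^l})$ that the walk exits a scale-$L_k$ box through the "wrong" part of the boundary is controlled, via the strong Markov property and a chaining/covering argument over sub-boxes at scale $L_{k-1}$, by something like $p_k \le C\, N_k\, p_{k-1}^{2} + (\text{error})$, where $N_k$ is a polynomial-in-$L_k$ combinatorial factor counting how many scale-$(k-1)$ boxes the walk must traverse. The "doubling" of the exponent at each step (the $p_{k-1}^2$) is what ultimately yields the $2^{\#\text{steps}}$, hence the $\ln 2$ in $\gamma_L$.

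The key steps, in order: (i) use $\Pasymp_M$ to get, for every $b>0$ and every $l'$ near $l$, a genuine polynomial bound $P_0(H_{bL}^{-l'} < H_L^{l'}) \le C L^{-M}$ for all large $L$ — this is the only place the hypothesis enters, and the largeness $M>\degAss$ must be tracked so that after absorbing all the polynomial combinatorial losses $N_k$ the recursion still contracts; (ii) establish the seed estimate, i.e. that at the base scale $p_{L_0}$ is already small enough (a suitable negative power of $L_0$) — this follows from (i) by comparing the slab-exit event defining $\Pasymp_M$ with exit through $\partial\mathcal D_{L_0}^l$, using uniform ellipticity \ref{item:standardAssumptions1} to handle the lateral (transverse) boundary at cost $\kappa^{-O(L_0^{\cdots})}$, which is why the transverse width of $\mathcal D_L^l$ is chosen as $L^3\ln\ln L/\ln L$ rather than something larger; (iii) prove the one-step renormalisation inequality by decomposing a crossing of the scale-$k$ box into successive crossings of scale-$(k-1)$ sub-boxes — the walk either makes forward progress (good) or must "backtrack" across a sub-box, an event of probability $\le p_{k-1}$, and two such independent-ish backtrackings are needed to reach $\partial\mathcal D_{L_k}^l$, giving the square; (iv) iterate the recursion $O(\ln\ln L)$ times and read off that $p_L \le \exp\{-L^{(1+o(1))\ln 2/\ln\ln L}\}$, which is precisely \eqref{eq:superPolDecay}, hence $(T)_{\gamma_L}|l$.

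I expect the main obstacle to be step (iii), the renormalisation inequality itself: controlling the averaged measure $P_0$ requires decoupling crossings of disjoint sub-boxes even though under the annealed law the walk is not Markovian, so one must work conditionally on the environment (using the quenched Markov property) and only then average, carefully handling the fact that the sub-box crossings share environment in the overlap/boundary regions. This is where the geometry of $\mathcal D_L^l$ — in particular the factor $10L$ in the forward direction and the subpolynomial-in-$L$ (but superlinear) transverse scale — has to be tuned so that (a) the walk cannot "cheat" by going around a sub-box laterally without paying, and (b) the number $N_k$ of sub-boxes to be covered stays polynomial, so that $M>\degAss$ suffices to beat it. A secondary difficulty is bookkeeping the $o(1)$ in the exponent through the iteration and verifying that the choice of $\alpha$ and of the scale growth is compatible with both the contraction of the recursion and the target exponent $\gamma_L$; I would do this last, after the structural estimates are in place.
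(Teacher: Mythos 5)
Your high-level mechanism is right (squaring over $\sim \ln L/\ln\ln L$ nested scales produces the exponent $\gamma_L = \ln 2/\ln\ln L$), and you correctly flag that the non-Markovian character of $P_0$ is the principal obstacle. But the recursion you propose --- $p_k \le C N_k\, p_{k-1}^2 + \text{error}$ directly on the annealed exit probabilities $p_k = P_0(H_{\partial\mathcal D_{L_k}^l} < H_{\partial_+\mathcal D_{L_k}^l})$ --- is not what the paper proves, and I don't see how to establish it as stated. Your justification for the square is that ``two independent-ish backtrackings are needed'' to exit the scale-$k$ box through the wrong part; that is not true as a path statement (a single sustained backtrack, or a lateral escape, suffices), and even if it were, under $P_0$ the second backtrack is conditioned on the environment revealed during the first, so the two events are not independent unless you have already arranged for them to occur in disjoint regions of the environment. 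Trying to do that at the level of walk trajectories is precisely the bookkeeping nightmare the paper avoids.

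The paper's actual architecture performs the squaring at the level of the \emph{environment}, not the walk. It introduces a notion of good and bad boxes (Definition \ref{def:goodbad}): $B_0$ is good if the quenched forward-exit probability from its middle frontal part exceeds $1-N_0^{-5}$, and $B_k$ is good if there is a single exceptional sub-box $Q_{k-1}$ such that every scale-$(k-1)$ box meeting $B_k$ but disjoint from $Q_{k-1}$ is good. Then Proposition \ref{prop:likelygood} shows $q_k := \P(B_k \text{ bad}) \le \exp\{-2^k\}$ by the recursion $q_k \le \binom{m_k}{2} q_{k-1}^2$: badness of $B_k$ forces two \emph{non-intersecting} bad sub-boxes, and for those the IID assumption \ref{item:IIDassumption} gives genuine independence, hence the honest square. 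Separately, Proposition \ref{prop:GoodBoxRightExit} proves a deterministic quenched estimate: if $B_k$ is good for $\omega$, then $\sup_{x\in\widetilde B_k} P_{x,\omega}(H_{\partial B_k}\ne H_{\partial_+B_k}) \le e^{-c_4 N_k}$, by an inductive rescaled-walk argument (side exits forced to traverse many good sub-boxes; back exits compared to a one-dimensional drifted walk that can only turn around inside the exceptional column $\mathcal Q_{k-1}$). The proof of Proposition \ref{prop:superPolDecay} then just splits $P_0(\text{wrong exit from }\mathcal S_L^l)$ according to whether some relevant scale-$k_L$ box is bad (controlled by $\exp\{-2^{k_L}\}$ and a polynomial count) or all are good (controlled by the quenched estimate). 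This separation --- environmental renormalisation plus a per-environment walk estimate --- is the missing idea in your proposal, and it is what makes the independence needed for the square available. The scale choice $N_{k+1}=3(N_0+k)^2N_k$ rather than your $L_{k+1}=cL_k^{\alpha}$ is a secondary difference; both give $k_L\sim \ln L/\ln\ln L$.
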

The previous proposition will be proven in Section \ref{sec:renormalisation}.

To be able to formulate the second essential ingredient we have to recall the effective criterion
which has been introduced in \cite{Sz-02} and can be seen as an analogue to the conditions of Solomon (cf. Theorem
\ref{thm:Solomon})
in higher dimensions. 

For positive numbers $L,$ $L'$ and $\widetilde{L}$ as well as a space rotation $R$
around the origin we define the 
\begin{align*}
\text{{\it box specification} }{\mathcal{B}}(R, L, L', \widetilde{L})
\text{ as the box }
B:= \big \{x\in\mathbb Z^d:x\in R((-L,L') \times (-\widetilde{L}, \widetilde{L})^{d-1}) \big\}.
\end{align*}
 Furthermore, let
$$
\rho_{\mathcal{B}}(\omega) := \frac{P_{0,\omega} \big({H_{\partial B}} \not= H_{\partial_+ B} \big )}{P_{0,\omega} \big (H_{\partial B} =H_{\partial_+ B} \big)}.
$$
Here,
$$
\partial_+ B := \Big\{x \in \partial B : R(e_1) \cdot x \geq L', \vert R(e_j) \cdot x \vert < \widetilde{L} \; \forall j \in \{2, \dots, d\} \Big\}.
$$
We will sometimes write $\rho$ instead of $\rho_{\mathcal{B}}$ if the box we refer to is clear from the context
and use $\hat{R}$ to label any rotation mapping $e_1$ to $\hat{v}.$ 
Given $l\in\mathbb{S}^{d-1}$, the {\it effective
  criterion with respect to $l$} is satisfied if
for some $L > c_2$ and $\widetilde{L} \in [3\sqrt{d}, L^3),$ we have that
\begin{equation} \label{effectiveCritInf}
\inf_{{\mathcal{B}}, a} \Big\{ c_3 \Big(\ln \frac{1}{\kappa} \Big)^{3(d-1)} \widetilde{L}^{d-1} L^{3(d-1)+1} \E \rho_{\mathcal{B}}^a \Big\} < 1.
\end{equation}
Here, when taking the infimum,
$a$ runs over $[0,1]$ while ${\mathcal{B}}$ runs over the 
\begin{equation} \label{eq:spex}
\text{box specifications }
{\mathcal{B}}(R, L-2, L+2, \widetilde{L}) \text{ with $R$ a rotation around the origin such that $R(e_1) = l.$}
\end{equation}
Furthermore, $c_2$ and $c_3$ are dimension dependent constants.

The effective criterion is of significant importance due to its equivalence to $(T')$ (cf. Theorem \ref{Sz24}) and the fact that
it can be checked on finite boxes (in comparison to $(T')$ which is asymptotic in nature).

\begin{theorem}[\cite{Sz-02}] \label{Sz24}
Assume \ref{item:IIDassumption} and \ref{item:standardAssumptions1} to be fulfilled.
 For each $l \in \mathbb{S}^{d-1}$  the following
conditions are equivalent.
\begin{enumerate}
\item The effective criterion with respect  to $l$ is satisfied.

\item
$(T')|l$   is satisfied.

\end{enumerate}
\end{theorem}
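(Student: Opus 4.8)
The statement is Sznitman's effective-criterion equivalence \cite{Sz-02}; the plan is to establish the two implications separately, the first being soft and the second the real content. For $(T')|l\Rightarrow$ the effective criterion, we would fix a box specification $\mathcal B=\mathcal B(R,L-2,L+2,\widetilde L)$ with $R(e_1)=l$ and $\widetilde L$ a fixed power of $L$ strictly below $L^3$ (of order $L^{3/2}$, say). Condition $(T')|l$ supplies, for every $\gamma<1$, a bound $P_0(H_{\partial B}\neq H_{\partial_+B})\le e^{-cL^\gamma}$ — the box-exit event compares with the half-space exit event of Definition~\ref{def:Tgamma} at the cost of only a polynomial factor. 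We would then split $\Omega$ according to whether $P_{0,\omega}(H_{\partial B}\neq H_{\partial_+B})\le t_L:=e^{-L^\gamma/10}$: on the good set $\rho_{\mathcal B}\le 2t_L$, on the complement Markov's inequality bounds the $\P$-mass by $e^{-cL^\gamma}/t_L$ while uniform ellipticity \ref{item:standardAssumptions1} still gives the deterministic bound $\rho_{\mathcal B}\le P_{0,\omega}(H_{\partial B}=H_{\partial_+B})^{-1}\le\kappa^{-C_L}$ with $C_L$ polynomial in $L$ (a lattice path inside $\overline B$ from $0$ to $\partial_+B$ has polynomial length, each step costing at least $\kappa$). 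Choosing $a=a_L\to 0$ a suitable inverse power of $L$, this makes $\E\rho_{\mathcal B}^{a_L}\le(2t_L)^{a_L}+e^{-cL^\gamma}t_L^{-1}\kappa^{-a_LC_L}$ subexponentially small, hence small enough to absorb the polynomial prefactor $c_3(\ln\tfrac1\kappa)^{3(d-1)}\widetilde L^{d-1}L^{3(d-1)+1}$ in \eqref{effectiveCritInf} for all large $L$.

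For the reverse implication, effective criterion $\Rightarrow(T')|l$, the plan is a multiscale renormalisation in which \ref{item:IIDassumption} enters essentially. First we would convert \eqref{effectiveCritInf} at a base scale $L_0$ into a \emph{seed estimate}: Markov's inequality in $\rho_{\mathcal B}$, after possibly enlarging $L_0$, yields a scale-$L_0$ box $B_0$ with $\P\big(P_{0,\omega}(\text{the walk exits }B_0\text{ through a non-frontal face})\ge q_0\big)\le q_0$ for a prescribed small $q_0$. Then we would induce over scales $L_{k+1}=\lceil\lambda L_k\rceil$ with $\lambda\in(1,4)$ fixed. A scale-$L_{k+1}$ box is tiled by $O((L_{k+1}/L_k)^d)$ translates of scale-$L_k$ boxes, and the geometric heart of the argument is that if the quenched walk in the big box performs the ``bad'' (strong backtracking) behaviour, then its path is forced to perform the scale-$L_k$ bad behaviour in at least two of the sub-boxes; these can be taken at mutual distance $\ge L_k$, so their environments are $\P$-independent. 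This produces a quadratic recursion $q_{k+1}\le CL_k^{\,c}q_k^2$, the polynomial factor absorbing the number of sub-box pairs, the unknown entrance point of the walk in each sub-box, and an ellipticity correction. Iterating, $q_k\le e^{-c2^k}$ once $q_0$ is small, and since $2^k$ is of order $L_k^{\ln 2/\ln\lambda}$ with $\ln 2/\ln\lambda>\tfrac12$, this gives $P_0\big(H^{l'}_L>H^{-l'}_{bL}\big)\le e^{-cL^{\gamma}}$ along the scales $L_k$ with $\gamma=\ln 2/\ln\lambda$; running the renormalisation for $\lambda\downarrow1$ covers every $\gamma\in(\tfrac12,1)$, and interpolating between consecutive scales (with the freedom in $l'$ near $l$ and in $b>0$) extends the bound to all large $L$. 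That is $(T)_\gamma|l$ for all $\gamma\in(\tfrac12,1)$, i.e. $(T')|l$.

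The main obstacle will be this recursion step: one must choose the bad event at each scale so that (i) its occurrence in the large box genuinely forces the bad event in two \emph{well-separated} sub-boxes — needing a careful path argument that every deep backward excursion of the walk into the box crosses a full sub-box and that at least two such crossings are unavoidable — and (ii) the losses from the walk entering each sub-box at an arbitrary boundary point and from controlling its lateral displacement (the reason the half-widths $\widetilde L$ must stay polynomial and below $L^3$) contribute only polynomial-in-$L_k$ factors, which the gain $q_k^2$ can still beat. It is exactly the tension between the scale-growth factor $\lambda$ and these polynomial corrections that confines the method to $\gamma\in(\tfrac12,1)$ in \cite{Sz-02}, and that makes the present paper's route through $\Pbox_M$ and $\Pasymp_M$ worthwhile.
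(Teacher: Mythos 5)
The paper does not prove Theorem~\ref{Sz24}: it is quoted from Sznitman's article \cite{Sz-02} (Theorem~2.4 there), so there is no in-paper proof to compare against. Comparing your sketch with Sznitman's own argument, the first direction is essentially right but the second one departs from the source and contains a real gap.

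For $(T')\vert l\Rightarrow$ effective criterion your plan is the standard one: threshold the quenched non-frontal exit probability at $t_L=e^{-L^\gamma/10}$, bound the $\P$-mass of the bad set with Markov's inequality, use $\rho_{\mathcal B}\le P_{0,\omega}(H_{\partial B}=H_{\partial_+B})^{-1}\le\kappa^{-CL}$ on that set, and pick $a_L\to 0$ so that both contributions to $\E\rho_{\mathcal B}^{a_L}$ die faster than the polynomial prefactor of \eqref{effectiveCritInf}. You correctly notice that this needs $\gamma>1/2$ so that there is room between $a_LL$ and $L^\gamma$. This is fine.

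For effective criterion $\Rightarrow (T')\vert l$, however, what you propose is a percolation-style recursion $q_{k+1}\le CL_k^c\,q_k^2$ on the $\P$-probability of a quenched ``bad box'' event, over geometric scales $L_{k+1}=\lceil\lambda L_k\rceil$. This is \emph{not} how \cite{Sz-02} argues. Sznitman propagates the moment $\E\rho_{\mathcal B}^a$ itself: for $L_1\asymp N_0L_0$ he decomposes $\rho_{\mathcal B_1}$ along the $l$-direction into roughly $N_0$ factors at scale $L_0$ plus lateral-escape corrections, and uses \ref{item:IIDassumption} to bound $\E\rho_{\mathcal B_1}^{a_1}$ by a high power of $\E\rho_{\mathcal B_0}^{a_0}$. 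It is a multiplicative, quasi-one-dimensional argument in the spirit of Solomon's criterion, not a quadratic event recursion. The gap in your version is precisely the point you flag as obstacle~(i): if ``bad'' simply means that $P_{0,\omega}(H_{\partial B_{k+1}}\ne H_{\partial_+B_{k+1}})$ is atypically large, it is \emph{not} true that this forces the analogous quenched event in two well-separated scale-$L_k$ sub-boxes; a single misbehaving sub-box can by itself deflect the walk out of $B_{k+1}$ through a non-frontal face. The paper's own Section~\ref{sec:renormalisation} shows what it really costs to run a two-bad-boxes renormalisation: one must replace ``bad quenched exit probability'' by the \emph{recursive} notion of good box in Definition~\ref{def:goodbad} (existence of a $Q_{k-1}$ such that all scale-$(k-1)$ boxes missing it are good), prove the two-bad-boxes implication for this tailored definition (Proposition~\ref{prop:likelygood}), and then separately translate goodness into an exit estimate (Proposition~\ref{prop:GoodBoxRightExit}). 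Moreover, that renormalisation, run with the scales of \eqref{eq:NkDef}, yields only the super-polynomial bound \eqref{eq:superPolDecay}, not a stretched exponential; the passage from there to $(T')$ is exactly what Sznitman's moment recursion (and Section~\ref{sec:Strategy} of this paper) are for.

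Two smaller slips: for $\gamma=\ln 2/\ln\lambda\in(1/2,1)$ one needs $\lambda\in(2,4)$, so ``running the renormalisation for $\lambda\downarrow 1$'' should read $\lambda\downarrow 2$; and with $\lambda<2$ your recursion would yield decay faster than $e^{-cL}$, which is impossible by uniform ellipticity, a symptom that the unsubstantiated quadratic recursion is too strong as stated.
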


We can now formulate the second key-ingredient for our proof of Theorem \ref{thm:PolImpliesStretched}.

\begin{proposition}[Atypical quenched exit estimates] \label{prop:quenchedExit}
Assume \ref{item:IIDassumption} and \ref{item:standardAssumptions1} to hold.
Furthermore, let
$(T)_{\gamma_L} \vert l$
be fulfilled.
Then, for 
$\epsilon(L):=\frac{1}{(\ln\ln L)^2},
$ and each function $\beta:(0,\infty)\to (0,\infty),$ 
one has that
\begin{equation} \label{eq:quenchedExit}
 \P \Big(P_{0,\omega}  \big(H_{\partial B}= H_{\partial_+ B} \big) \leq \frac{1}{2} \exp \big\{ -c_1 L^{\beta(L)} \big\}
 \Big)
\le 5^d \frac{e}{\lceil L^{\beta(L)-\epsilon(L)}/5^d \rceil!},
\end{equation}
where $B$ is a box specification as in \eqref{eq:spex} with $\widetilde L = L^3-1,$ and
\begin{equation} \label{eq:c4Def}
c_1 := - 2d \ln \kappa > 1.
\end{equation}
\end{proposition}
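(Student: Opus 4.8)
\textbf{Proof strategy for Proposition \ref{prop:quenchedExit}.}

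The plan is to estimate the quenched exit probability $P_{0,\omega}(H_{\partial B} = H_{\partial_+ B})$ from below by chopping the box $B$ into a bounded number (of order $L^{\epsilon(L)}$-many, in the relevant logarithmic scale) of shorter sub-boxes along the direction $l$, and using uniform ellipticity together with the Markov property to reduce everything to the averaged estimate $(T)_{\gamma_L}\vert l$ encoded in \eqref{eq:superPolDecay}. Concretely: since $\widetilde L = L^3 - 1$, the cross-section of $B$ matches (up to constants) that of a slab $\mathcal D_\ell^l$ for $\ell \approx L$; so the event that the walk started at a point near the back of such a sub-slab exits through its frontal face before leaving the slab has averaged probability at least $1 - \exp\{-\ell^{\gamma_\ell(1+o(1))}\}$ by \eqref{eq:superPolDecay}. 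Chaining roughly $N := \lceil L^{\epsilon(L)} \rceil$ (more precisely, $\lceil L^{\beta(L)-\epsilon(L)}/5^d\rceil$, matching the right-hand side) such successful crossings, and paying a factor $\kappa^{2d}$ per crossing to nudge the walk back to the center of the next sub-slab's back face, yields a quenched lower bound of the form $(\kappa^{2d})^{N}\cdot(\text{product of } (1-\text{small}))$, i.e. at least $\exp\{-c_1 L^{\beta(L)}\}$ on a good set of environments, where $c_1 = -2d\ln\kappa$ absorbs the ellipticity cost per step of the chaining.

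The probabilistic core is then to control how often the chaining can fail. For a single sub-slab the quenched crossing probability is a random variable whose averaged complement is super-polynomially small by \eqref{eq:superPolDecay}; by Chebyshev/Markov this random variable is, with overwhelming $\P$-probability, close to $1$. The factor-of-$\frac12$ and the exact combinatorial shape $5^d\, e / \lceil L^{\beta(L)-\epsilon(L)}/5^d\rceil!$ on the right-hand side of \eqref{eq:quenchedExit} strongly suggest the following mechanism: one covers $B$ by $5^d$-many staggered families of sub-boxes (the $5^d$ accounts for the fact that a box of side $\sim L^3$ in the transverse directions together with length $\sim L$ can be tiled in $5^d$ shifted patterns so that any crossing path stays inside one member of some family), and within each family the successive crossing events are \emph{independent} under $\P$ because the sub-boxes are disjoint and the environment is i.i.d.\ by \ref{item:IIDassumption}. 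The number of "bad" sub-boxes in a family of size $k := \lceil L^{\beta(L)-\epsilon(L)}/5^d\rceil$ is then stochastically dominated by a Binomial with success (= failure of crossing) probability $p$ that is super-polynomially small; the probability that \emph{every} sub-box in the family is good is $(1-p)^k \ge 1 - kp$, and summing the tail of the "at least one bad" event over the $5^d$ families and over all $k$ via the crude bound $kp \le p\cdot e/\,(\text{something})!$... — more cleanly, $\P(\text{some family has a bad box}) \le 5^d\, k\, p$, and since $p = \exp\{-L^{\gamma_L(1+o(1))}\}$ decays faster than any inverse factorial of $L^{\beta(L)-\epsilon(L)-o(1)}$, one bounds this by $5^d e/k!$ after a routine comparison (using $L^{\gamma_L} = L^{\ln 2/\ln\ln L} = 2^{(\ln L)/\ln\ln L}$ and Stirling).

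The main obstacle — and the step requiring genuine care rather than bookkeeping — is the \emph{geometric reduction}: one must verify that crossing each short sub-slab really does only require knowledge of the environment inside that sub-slab (so that independence across a family is legitimate), while simultaneously ensuring that the transverse fluctuations of the chained walk, accumulated over $N \approx L^{\epsilon(L)}$ sub-slab crossings, do not take it outside the transverse window $|x\cdot l_j| \le L^3\ln\ln L/\ln L$ of $\mathcal D^l$. Here is where the precise choice $\epsilon(L) = 1/(\ln\ln L)^2$ is used: each crossing contributes transverse displacement at most $\sim \ell \sim L$ on the event in \eqref{eq:superPolDecay} (otherwise it would have exited $\partial\mathcal D^l$), so after $N$ crossings the transverse spread is $\lesssim N\cdot L \sim L^{1+\epsilon(L)} \ll L^3/\ln L$, comfortably inside the allowed window; one must check the inequality with the exact constants and the $\ln\ln L$ corrections. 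The rest — translating $(T)_{\gamma_L}$ from $\mathcal D^l$ to the box specification $\mathcal B(R, L-2, L+2, L^3-1)$, handling the $\pm 2$ shifts by ellipticity, and matching $c_1 = -2d\ln\kappa > 1$ to the per-step cost (the last inequality is just the requirement \ref{item:standardAssumptions1} with $\kappa$ small enough, or absorbed into the constant) — is routine.
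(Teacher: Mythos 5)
Your outline has the right skeleton — a one-step renormalisation, a good/bad dichotomy for sub-boxes, the use of $(T)_{\gamma_L}$ together with Markov's inequality to make badness super-polynomially unlikely, the $5^d$ staggered families to manufacture independence under \ref{item:IIDassumption}, and a binomial tail estimate $e/k!$ to produce the factorial. These are all exactly the ingredients of the paper's argument. However, the quantitative bookkeeping — which is the whole substance of the proposition — is wrong in several linked places, and as stated the proposal would not deliver the claimed bound.

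\textbf{The scale of the sub-boxes is inverted.} You propose chopping $B$ into ``of order $L^{\epsilon(L)}$-many'' slabs along $l$, i.e.\ sub-slabs of length $\sim L^{1-\epsilon(L)}$, whose cross-sections ``match that of $\mathcal D_\ell^l$ for $\ell\approx L$.'' The paper instead tiles $B$ with \emph{small} parallelograms of length $n=\lfloor L^{\epsilon(L)}\rfloor$ in direction $l$ and transverse width $\sim n^3\ln\ln n/\ln n$ — so there are $\approx L^{1-\epsilon(L)}$ of them in the $l$-direction and polynomially many transversally, which is what makes the binomial comparison with parameter $\sim L^d$ and the tail $e/\lceil L^{\beta-\epsilon}/5^d\rceil!$ work. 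With your choice, there are only $\sim L^{\epsilon(L)}$ slabs per family, far too few for a factorial tail of order $L^{\beta-\epsilon}$, and the parallelograms spanning the full cross-section are not disjoint in a way that yields $5^d$ independent families of the required size.

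\textbf{The ellipticity cost is paid per bad box, not per crossing, and its magnitude is $\exp\{-c_1 L^{\epsilon(L)}\}$, not $\kappa^{2d}$.} You write ``paying a factor $\kappa^{2d}$ per crossing to nudge the walk,'' yielding $(\kappa^{2d})^N$. That accounting cannot produce $\exp\{-c_1 L^{\beta(L)}\}$ uniformly in $\beta$: if every one of the $\approx L^{1-\epsilon}$ crossings costs a fixed $\kappa^{2d}$, the bound is $\exp\{-c_1 L^{1-\epsilon}\}$, which is far too small for small $\beta$; if instead the cost is tied to the number of bad boxes but still $\kappa^{2d}$ per box, the exponent comes out as $L^{\beta-\epsilon}$, not $L^{\beta}$. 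The correct mechanism is: on the event that fewer than $L^{\beta(L)-\epsilon(L)}$ parallelograms intersecting $B$ are bad, the walk pays \emph{full ellipticity cost to traverse each bad parallelogram deterministically}, namely $\exp\{-c_1 L^{\epsilon(L)}\}=\kappa^{2d\lfloor L^{\epsilon(L)}\rfloor}$ per bad box, while good boxes contribute $(1-L^{-\epsilon(L)^{-1}})$ each; the product over at most $L^{\beta-\epsilon}$ bad boxes times $(1-L^{-\epsilon^{-1}})^{L}$ then gives exactly $\tfrac12\exp\{-c_1 L^{\beta(L)}\}$. This is where the choice $\epsilon(L)=(\ln\ln L)^{-2}$ is genuinely used: it balances the sub-box length against the factorial threshold so that the exponents combine to $\beta$.

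\textbf{The geometric concern you flag is misplaced.} You worry that transverse fluctuations accumulated over crossings might leave the window $|x\cdot l_j|\le L^3\ln\ln L/\ln L$, using a per-crossing transverse drift of order $\ell\sim L$. In the actual scheme each parallelogram has transverse width only $\sim n^3 \ln\ln n/\ln n$ with $n=L^{\epsilon(L)}$, and a good crossing exits through the frontal boundary of the parallelogram by definition, so the transverse spread per crossing is controlled at that much smaller scale. This is essentially automatic once the correct sub-box geometry is in place, and it is also what makes the sub-boxes disjoint enough to split into $5^d$ independent families. Finally, the application of $(T)_{\gamma_L}$ is not to $B$ or a slab of length $\approx L$, but to the small parallelograms $\mathcal R_n(0)$; this changes the quantitative form of the super-polynomial decay (one gets $\exp\{-L^{(1+o(1))\ln 2/(\ln\ln L)^3}\}$ after substituting $n=L^{\epsilon(L)}$), which is precisely the decay rate the binomial comparison requires.
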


The proof of this result is the subject of Section \ref{sec:proofPropQuenchedExit}.

\subsection{Proof of Theorem \ref{thm:PolImpliesStretched} (assuming Propositions \ref{prop:superPolDecay} and 
\ref{prop:quenchedExit})} \label{subsec:proofMainThm}

Our proof of Theorem \ref{thm:PolImpliesStretched} goes along establishing the effective criterion and in the following
we will give some lemmas that will prove useful in this.

For that purpose, we define the quantities
\begin{equation} \label{eq:gammaDef}
 \beta_1(L)  := \frac{\gamma_L}{2} = \frac{\ln 2}{2 \ln \ln L},
\end{equation}
\begin{equation} \label{eq:alphaDef}
\alpha(L) := \frac{\gamma_L}{3} =\frac{\ln 2}{3 \ln \ln L},
\end{equation}
\begin{equation} \label{eq:aDef}
a:=L^{-\alpha(L)}
\end{equation}
and write $\rho$ for $\rho_{\mathcal B}$ with
some arbitrary box specification of \eqref{eq:spex} with $\widetilde L = L^3-1.$ We split
$
\E \rho^a
$
according to 
\begin{align}
\label{decomp}
 \E \rho^a = \mathcal{E}_0 + \sum_{j=1}^{n-1} \mathcal{E}_j+\mathcal{E}_n,
\end{align}
where 
\begin{equation*} 
n:=n(L):=\Big \lceil \frac{4(1-\gamma_L/2)}{\gamma_L} \Big \rceil + 1,
\end{equation*}
$$
\mathcal{E}_0 := \E \Big( \rho^a, P_{0,\omega} \big({H_{\partial B}} =
H_{\partial_+ B} \big) > \frac12 \exp \big\{-c_1 L^{\beta_1} \big\} \Big),
$$
$$
\mathcal{E}_j := 
\E \Big( \rho^a, \frac12 \exp\big\{ -c_1 L^{\beta_{j+1}} \big\}
 < P_{0,\omega}({H_{\partial B}} = H_{\partial_+ B}) \leq \frac12 \exp \big \{- c_1 L^{\beta_j} \big \} \Big)
$$
for $j \in \{1, \ldots, n-1\},$
and
$$
\mathcal{E}_n := \E \Big( \rho^a, P_{0,\omega}\big({H_{\partial B}} = H_{\partial_+ B}\big) \le \frac12 
\exp \big \{- c_1 L^{\beta_n} \big\} \Big),
$$
with parameters 
\begin{equation} \label{eq:betaDef}
\beta_j(L) := \beta_1(L) + (j-1) \frac{\gamma_L}{4},
\end{equation}
for $2\le j\le n(L);$  for the sake of brevity we may sometimes omit the dependence on $L$  of the parameters
if that does not cause any confusion.
Furthermore, in order to verify that equality \eqref{decomp} is indeed true,
note that due to the uniform ellipticity assumption \ref{item:standardAssumptions1}
and the choice of $c_1$ (cf. \eqref{eq:c4Def}),
one has for $\P$-a.a. $\omega$ that
$$
P_{0,\omega} \big( {H_{\partial B}} = H_{\partial_+ B} \big) > e^{-c_1 L},
$$
as well as that
\begin{equation*} 
\beta_{n} >1.
\end{equation*}
To bound $\mathcal{E}_0$ we employ the following lemma.
\begin{lemma}
\label{I} Let $(T)_{\gamma_L} \vert l$ be fulfilled. Then
\begin{equation} 
\nonumber
\mathcal{E}_0
\leq \exp \Big\{ c_1 L^{\gamma_L/6} - L^{(1+o(1))\gamma_L/2} \Big\},
\end{equation}
as $L \to \infty.$
\end{lemma}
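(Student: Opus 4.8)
\textbf{Proof proposal for Lemma \ref{I}.}

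The plan is to bound $\mathcal E_0$ by splitting the expectation according to whether the averaged exit event of \eqref{eq:superPolDecay} (in the form $(T)_{\gamma_L}\vert l$) occurs for the box $B$ or not. On the set where $P_{0,\omega}(H_{\partial B}=H_{\partial_+B})>\tfrac12\exp\{-c_1L^{\beta_1}\}$, the denominator of $\rho_{\mathcal B}$ is bounded below by $\tfrac12\exp\{-c_1 L^{\beta_1}\}$, while the numerator $P_{0,\omega}(H_{\partial B}\neq H_{\partial_+B})\le 1$ trivially. Hence on this set $\rho_{\mathcal B}\le 2\exp\{c_1 L^{\beta_1}\}$, and therefore $\rho^a\le 2^a\exp\{a\,c_1 L^{\beta_1}\}$. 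Recalling $a=L^{-\alpha(L)}$ with $\alpha(L)=\gamma_L/3$ and $\beta_1=\gamma_L/2$, the exponent is $a\,c_1 L^{\beta_1}=c_1 L^{\gamma_L/2-\gamma_L/3}=c_1 L^{\gamma_L/6}$, which produces the first term $\exp\{c_1 L^{\gamma_L/6}\}$ in the claimed bound (the harmless factor $2^a\le 2$ is absorbed). This gives a deterministic pointwise bound on $\rho^a$ on the relevant event, so
\[
\mathcal E_0\le 2\exp\{c_1 L^{\gamma_L/6}\}\cdot \P\Big(P_{0,\omega}(H_{\partial B}=H_{\partial_+B})>\tfrac12\exp\{-c_1L^{\beta_1}\}\Big).
\]

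It remains to show that this last probability is at most $\exp\{-L^{(1+o(1))\gamma_L/2}\}$. For this I would use Markov's (Chebyshev's) inequality after taking expectation over the environment: since $\P(P_{0,\omega}(H_{\partial B}=H_{\partial_+B})>t)\le t^{-1}\E_\P[P_{0,\omega}(H_{\partial B}=H_{\partial_+B})]=t^{-1}P_0(H_{\partial B}=H_{\partial_+B})$, with $t=\tfrac12\exp\{-c_1L^{\beta_1}\}$ we get
\[
\P\Big(P_{0,\omega}(H_{\partial B}=H_{\partial_+B})>t\Big)\le 2\exp\{c_1 L^{\beta_1}\}\,P_0\big(H_{\partial B}=H_{\partial_+B}\big).
\]
Wait — this is the wrong direction: I actually need a \emph{lower} bound excluded, i.e. I need that $P_{0,\omega}$ is \emph{typically small}, so I should bound $\P(P_{0,\omega}(H_{\partial B}=H_{\partial_+B})>t)$ by noting that the complementary exit probability $P_{0,\omega}(H_{\partial B}\neq H_{\partial_+B})$ is typically close to $1$. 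Concretely, $P_{0,\omega}(H_{\partial B}=H_{\partial_+B})>t$ forces $P_{0,\omega}(H_{\partial B}\neq H_{\partial_+B})<1$ only trivially, so instead I use that condition $(T)_{\gamma_L}\vert l$, i.e. \eqref{eq:superPolDecay}, gives under $P_0$ that $P_0(H_{\partial \mathcal D_L^l}<H_{\partial_+\mathcal D_L^l})\le\exp\{-L^{(1+o(1))\gamma_L}\}$ — but the event $\{H_{\partial B}=H_{\partial_+B}\}$ is the \emph{complement} of $\{H_{\partial B}<H_{\partial_+B}\}$ up to adjusting the box, so $P_0(H_{\partial B}=H_{\partial_+B})\ge 1-\exp\{-L^{(1+o(1))\gamma_L}\}$, which does not help bound it from above. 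The correct route: the event in the probability says the quenched success probability is \emph{large}; but by $(T)_{\gamma_L}$ we know that $P_0(H_{\partial B}\neq H_{\partial_+B})$ — averaged — is tiny, hence $P_{0,\omega}(H_{\partial B}\neq H_{\partial_+B})$ is tiny with high probability; equivalently $P_{0,\omega}(H_{\partial B}=H_{\partial_+B})$ is close to $1$. So the event $\{P_{0,\omega}(H_{\partial B}=H_{\partial_+B})>\tfrac12\exp\{-c_1L^{\beta_1}\}\}$ has probability close to $1$, not small. I must therefore re-read: $\mathcal E_0$ is precisely the \emph{main} contribution and the bound on it should come from bounding $\rho^a$ pointwise by $2\exp\{c_1 L^{\gamma_L/6}\}$ \emph{times} the small averaged probability that the exit is through $\partial B\setminus\partial_+ B$ at all.

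Here is the clean version. On the event defining $\mathcal E_0$ one has $\rho_{\mathcal B}\le 2\exp\{c_1L^{\beta_1}\}\,P_{0,\omega}(H_{\partial B}\neq H_{\partial_+B})$, hence since $0\le a\le 1$ and $x\mapsto x^a$ is concave, $\rho^a\le \big(2\exp\{c_1L^{\beta_1}\}\big)^a\big(P_{0,\omega}(H_{\partial B}\neq H_{\partial_+B})\big)^a\le 2\exp\{c_1L^{\gamma_L/6}\}\,\big(P_{0,\omega}(H_{\partial B}\neq H_{\partial_+B})\big)^a$. Taking $\E_\P$ and using $\E_\P[(P_{0,\omega}(\cdot))^a]\le (\E_\P[P_{0,\omega}(\cdot)])^a=(P_0(H_{\partial B}\neq H_{\partial_+B}))^a$ by Jensen, we obtain
\[
\mathcal E_0\le 2\exp\{c_1L^{\gamma_L/6}\}\,\big(P_0(H_{\partial B}\neq H_{\partial_+B})\big)^a.
\]
Now the box $B$ (with $\widetilde L=L^3-1$) is comparable to $\mathcal D_L^l$, so by \eqref{eq:superPolDecay}/$(T)_{\gamma_L}$ we have $P_0(H_{\partial B}\neq H_{\partial_+B})\le\exp\{-L^{(1+o(1))\gamma_L}\}$, whence $\big(P_0(H_{\partial B}\neq H_{\partial_+B})\big)^a\le\exp\{-a L^{(1+o(1))\gamma_L}\}=\exp\{-L^{(1+o(1))\gamma_L-\gamma_L/3}\}=\exp\{-L^{(1+o(1))2\gamma_L/3}\}$. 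Absorbing constants and adjusting the $o(1)$ this is $\exp\{-L^{(1+o(1))\gamma_L/2}\}$ (the exponent $2\gamma_L/3$ dominates $\gamma_L/2$, so in particular the stated bound holds), giving $\mathcal E_0\le\exp\{c_1L^{\gamma_L/6}-L^{(1+o(1))\gamma_L/2}\}$ as claimed. The main obstacle — and the only nonroutine point — is making the geometric comparison between the box $B$ of the box specification \eqref{eq:spex} and the slab $\mathcal D_L^l$ rigorous, so that the averaged estimate \eqref{eq:superPolDecay} can legitimately be applied to $P_0(H_{\partial B}\neq H_{\partial_+B})$; this requires checking that the relevant dimensions of $B$ (front face at distance $\sim L$, transverse size $\sim L^3$) are within the regime where $(T)_{\gamma_L}$ supplies the stretched-exponential bound, together with absorbing the resulting polynomial-in-$L$ correction factors from \eqref{eq:superPolDecay} into the $(1+o(1))$ in the exponent.
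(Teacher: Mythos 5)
Your cleaned-up version is exactly the paper's argument: on the defining event of $\mathcal E_0$ one bounds $\rho^a \le 2^a\exp\{a\,c_1 L^{\beta_1}\}\,P_{0,\omega}(H_{\partial B}\neq H_{\partial_+ B})^a$, passes to $P_0(H_{\partial B}\neq H_{\partial_+ B})^a$ by Jensen, and feeds in $(T)_{\gamma_L}$ with $a=L^{-\gamma_L/3}$ to get the stated exponent (the paper's one-line proof is precisely this). The paper, too, suppresses the geometric comparison between the box specification $B$ and the slab $\mathcal D_L^l$ that you flag at the end, so that observation is a fair note on the paper's terseness rather than a divergence in approach.
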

\begin{proof}
Jensen's inequality yields
\begin{align*}
\mathcal{E}_0
\leq 2\exp\big\{ c_1 L^{\beta_1-\alpha} \big\}
P_0 \big ({H_{\partial B}} \not= H_{\partial_+ B} \big)^a.
\end{align*}
Using \eqref{eq:alphaDef} and \eqref{eq:gammaDef}, 
 in combination with  $(T)_{\gamma_L}$ we obtain the desired result.
\end{proof}

To deal with the middle summand in the right-hand side of (\ref{decomp}), we
use the following lemma.

\begin{lemma} \label{II}
Assume \ref{item:IIDassumption} and \ref{item:standardAssumptions1} to hold and
 let $(T)_{\gamma_L} \vert l$ be fulfilled. Then for all $L$ large enough we have uniformly in
${j \in \{1, \ldots, n-1\}}$ that
\begin{equation*} 
\mathcal{E}_j
\leq 2 \cdot 5^d \exp \big\{c_1 L^{\beta_{j+1}-\alpha}\big\} \frac{e}{ \lceil L^{\beta_j-\epsilon(L)}/5^d \rceil!}.
\end{equation*}
\end{lemma}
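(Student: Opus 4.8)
The plan is to bound $\mathcal{E}_j$ by splitting the expectation according to the two possible behaviours of the quenched exit probability $P_{0,\omega}(H_{\partial B}=H_{\partial_+B})$ on the event defining $\mathcal{E}_j,$ exactly as was done for $\mathcal{E}_0$ but now exploiting the quantitative tail bound of Proposition~\ref{prop:quenchedExit}. On the event
$$
\tfrac12 \exp\{-c_1 L^{\beta_{j+1}}\} < P_{0,\omega}(H_{\partial B}=H_{\partial_+B}) \le \tfrac12 \exp\{-c_1 L^{\beta_j}\}
$$
the denominator of $\rho_{\mathcal B}$ is at least $\tfrac12\exp\{-c_1 L^{\beta_{j+1}}\},$ so $\rho_{\mathcal B}\le 2\exp\{c_1 L^{\beta_{j+1}}\}$ (since the numerator is at most $1$), and hence
$$
\rho_{\mathcal B}^a \le 2^a \exp\{a\,c_1 L^{\beta_{j+1}}\} \le 2\exp\{c_1 L^{\beta_{j+1}-\alpha}\},
$$
using $a=L^{-\alpha(L)}$ from \eqref{eq:aDef}. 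First I would pull this deterministic upper bound for $\rho_{\mathcal B}^a$ out of the expectation, leaving a factor $\P$ of the event in question.

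Next I would bound the probability of that event. It is contained in
$$
\Big\{\, P_{0,\omega}(H_{\partial B}=H_{\partial_+B}) \le \tfrac12 \exp\{-c_1 L^{\beta_j}\}\,\Big\},
$$
so Proposition~\ref{prop:quenchedExit}, applied with $\beta(L):=\beta_j(L)$ (which is legitimate, as $\beta_j$ is a positive function of $L$, and the box specification here has $\widetilde L=L^3-1$ matching the hypothesis of that proposition), gives
$$
\P\Big( P_{0,\omega}(H_{\partial B}=H_{\partial_+B}) \le \tfrac12 \exp\{-c_1 L^{\beta_j}\}\Big)
\le 5^d\,\frac{e}{\lceil L^{\beta_j-\epsilon(L)}/5^d\rceil!}.
$$
Multiplying this with the factor $2\exp\{c_1 L^{\beta_{j+1}-\alpha}\}$ from the previous step yields precisely
$$
\mathcal{E}_j \le 2\cdot 5^d \exp\{c_1 L^{\beta_{j+1}-\alpha}\}\,\frac{e}{\lceil L^{\beta_j-\epsilon(L)}/5^d\rceil!},
$$
which is the claimed inequality. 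The uniformity in $j\in\{1,\dots,n-1\}$ is automatic because the tail estimate of Proposition~\ref{prop:quenchedExit} holds for every choice of the exponent function, and "$L$ large enough" is only needed to ensure we are in the asymptotic regime where $(T)_{\gamma_L}\vert l$ and hence Proposition~\ref{prop:quenchedExit} is in force.

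The only genuinely delicate point is checking that the hypotheses of Proposition~\ref{prop:quenchedExit} are met for the particular exponents $\beta_j(L)$ defined in \eqref{eq:betaDef}: one must confirm that these stay in the range where the proposition was proved (in particular one should keep track of the fact that $\beta_j\le\beta_n$ and the constraints $\beta_1=\gamma_L/2$, $\alpha=\gamma_L/3$, $\epsilon(L)=(\ln\ln L)^{-2}$ are mutually compatible so that the exponent $\beta_j-\epsilon(L)$ appearing in the factorial is positive for large $L$). This is a bookkeeping matter rather than a conceptual obstacle; the substance of the estimate is entirely carried by Proposition~\ref{prop:quenchedExit}, so the lemma is essentially a one-line consequence of it once the $\rho_{\mathcal B}^a$ bound is in place.
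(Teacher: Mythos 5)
Your proposal is correct and follows essentially the same route as the paper: pull out the deterministic bound $\rho_{\mathcal B}^a \le 2\exp\{c_1 L^{\beta_{j+1}-\alpha}\}$ (valid on the event defining $\mathcal{E}_j$, since $a=L^{-\alpha}$), and then apply Proposition~\ref{prop:quenchedExit} with $\beta=\beta_j$ to the remaining $\P$-probability. You simply spell out the elementary derivation of the $\rho^a$ bound that the paper leaves implicit in its display \eqref{(II)Est}.
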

\begin{proof}
For $j \in \{1, \ldots, n-1\}$ we obtain the estimate
\begin{align} \label{(II)Est}
\mathcal{E}_j\leq 2\exp \big\{c_1 L^{\beta_{j+1} - \alpha}\big\}
\P \Big( P_{0,\omega} \big({H_{\partial B}} = H_{\partial_+ B} \big) \leq \frac12  \exp \big\{ -c_1 L^{\beta_{j}} \big\} \Big).
\end{align}
Due to Proposition
\ref{prop:quenchedExit},
the probability on the right-hand side of (\ref{(II)Est}) can be estimated from
above by
$$
 5^d \frac{e}{\lceil L^{\beta_j-\epsilon(L)}/5^d \rceil!},
$$
which finishes the proof.
\end{proof}
With respect to the term $\mathcal{E}_n$ in (\ref{decomp}) we note that it vanishes
due to the choice of $c_1.$

\begin{proof}[Proof of Theorem \ref{thm:PolImpliesStretched}]
It follows from  Lemmas \ref{I}, \ref{II}, the choice of parameters in \eqref{eq:gammaDef} to \eqref{eq:aDef}
and \eqref{eq:betaDef}, and the fact that $\mathcal E_n$ vanishes, that 
for $L$ large enough, (\ref{decomp}) can be bounded from above by 
\begin{align*}
\exp \Big\{ &c_1 L^{\gamma_L/6} - L^{(1+o(1))\gamma_L/2} \Big\} \\
&+  
2 \cdot 5^d n(L) \max_{1 \le j \le n(L)-1} \Big( \exp \big\{ c_1 L^{\beta_{j+1} - \gamma_L/3} \big\} 
\frac{e}{\lceil L^{\beta_j - \epsilon(L)}/5^d\rceil !} \Big).
%
\end{align*}
Thus, we see that for our choice of parameters, \eqref{decomp} tends to zero faster than any polynomial in $L.$
Hence, \eqref{effectiveCritInf} holds for $L$ large enough and the effective criterion is fulfilled, which in combination with
 Theorem \ref{Sz24} then yields the desired result.
\end{proof}

\section{Proof of Proposition \ref{prop:superPolDecay} (Sharpened averaged exit estimates)} \label{sec:renormalisation}

In this section we prove Proposition \ref{prop:superPolDecay} which has been employed in the proof of 
Theorem \ref{thm:PolImpliesStretched} in Section \ref{sec:Strategy}.

\subsection{Renormalisation step and condition $\Pbox$}
In this subsection we describe a renormalisation scheme that will finally lead to the proof of Proposition \ref{prop:superPolDecay}.


Let $N_0$ be an even integer larger than $c_0,$ where we recall that the latter has been defined in 
\eqref{eq:c0Def}. For $k \in \N_0,$ define recursively the scales
\begin{equation} \label{eq:NkDef}
N_{k+1} := 3(N_0+k)^2 N_k.
\end{equation}
We introduce for $k \in \N_0$ and $x \in \Z^d$ the boxes
\begin{equation}  \label{eq:boxes}
B(x,k) := \Big \{ y \in \Z^d : -\frac{N_k}{2} < (y-x) \cdot l < N_k,  \vert (y-x)\cdot l_j \vert
< 25N_k^{3} \; \; \forall \, 2 \le j \le d \Big\},
\end{equation}
as well as their frontal parts
\begin{equation} \label{eq:Btilde}
 \widetilde B(x,k) := \left\{ y \in \Z^d : N_k- N_{k-1} \leq (y-x) \cdot
l < N_k,  \vert (y-x)\cdot l_j \vert
< N_k^{3} \; \; \forall \, 2 \le j \le d \right\},
\end{equation}
with the convention that $N_{-1}:=2N_0/3.$
Furthermore, we define
\begin{equation} \label{eq:partialPlusDef}
\partial_+ B(x,k) := \Big\{y \in \partial B(x,k) : (y-x) \cdot l \geq N_k, \vert (y-x)\cdot l_j \vert
< 25N_k^{3} \; \; \forall \, 2 \le j \le d \Big\}.
\end{equation}
We will call $\widetilde{B}(x,k)$ its {\it middle frontal part} of $B(x,k)$
and sometimes refer to $\partial_+ B(x,k)$ as the {\it frontal boundary part} of $B(x,k).$
%
Furthermore, for $n_1, n_2 \in \N$ 
we fix the subset
\begin{equation} \label{eq:latticeType}
\mathcal L_{n_1, n_2} := \left\{ \left \lfloor \sum_{k=1}^d j_k l_k \right \rfloor \, : \, j_1 \in n_1 \Z, \, j_2, \ldots, j_d \in n_2 \Z \right\},
\end{equation}
of $\Z^d,$  where 
$\floor{\cdot}$ is understood coordinatewise. The letter $\mathcal L$ is chosen in order to be reminiscent of ``lattice''; one should however notice that 
$\mathcal L_{n_1, n_2}$ is only close to being a lattice in some sense.
We refer to the elements of
$$\mathfrak{B}_k := \left\{ B(x,k) : x \in
\mathcal{L}_{N_{k-1} -2, 2N^{3}_k -2} \right\}$$ 
as {\it boxes of scale $k$.}

To simplify notation, throughout we will denote a typical box of scale $k$ by $B_k$, and
its middle frontal part by $\widetilde B_k$. The reader should clearly distinguish such boxes from the box
configurations introduced around \eqref{effectiveCritInf}.

\begin{remark} \label{rem:coverProp}
For later reference note that the middle frontal parts of any scale cover $\Z^d,$ i.e., for any $k \ge 0$ one has 
$$
\bigcup_{B(x,k) \in \mathfrak{B}_k} \widetilde{B}(x,k) = \Z^d.
$$
\end{remark}

We can now introduce the condition $\Pbox{}.$

\begin{definition} \label{def:Pbox}
Let $l \in \mathbb S^{d-1}$ and $M > 0.$
We say that $\Pbox_M \vert l$ is fulfilled if
\begin{align} \label{eq:singleBound}
 \sup_{x \in \widetilde{B}_0} P_x \big(H_{\partial B_0} \ne H_{\partial_+ B_0} \big) < N_0^{- M}
\end{align}
holds for some $N_0 \ge c_0.$
\begin{remark}
In particular, note that although there does not occur any explicit dependence  on $l$ in  Definition \ref{def:Pbox}, it comes into
play via the displays \eqref{eq:boxes} to \eqref{eq:partialPlusDef}. Also, the very choice of $x$ for the box $B_0 = B(x,0)$ is irrelevant due to the translation invariance of $\P$ with respect to lattice
shifts.
\end{remark}


\end{definition}

\begin{lemma} \label{lem:asympBox}
For $M \in (0,\infty)$ and $l \in \mathbb S^{d-1},$ condition $\Pasymp_M \vert l$ implies $\Pbox_M \vert l.$
\end{lemma}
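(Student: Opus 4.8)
\textbf{Proof plan for Lemma \ref{lem:asympBox}.}

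The plan is to deduce the finite-box inequality \eqref{eq:singleBound} from the asymptotic statement \eqref{eq:PCond} by relating the box $B_0 = B(0,0)$ and its frontal boundary $\partial_+ B_0$ to a pair of half-spaces, so that the event $\{H_{\partial B_0} \ne H_{\partial_+ B_0}\}$ is contained in an event of the form $\{H_{bL}^{-l'} < H_L^{l'}\}$ appearing in $\Pasymp_M \vert l$, and then to choose $N_0$ large. First I would unfold the definitions: by \eqref{eq:boxes}–\eqref{eq:partialPlusDef}, leaving $B_0$ through $\partial B_0 \setminus \partial_+ B_0$ means the walk either exits through the back face $\{(y)\cdot l \le -N_0/2\}$ or through one of the lateral faces $\{|(y)\cdot l_j| \ge 25 N_0^3\}$ before reaching the frontal part $\{(y)\cdot l \ge N_0\}$. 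Starting from $x \in \widetilde B_0$, so that $x\cdot l \ge N_0 - N_{-1} = N_0 - 2N_0/3 = N_0/3 > 0$ and $|x\cdot l_j| < N_0^3$, I would note that on the event under consideration the walk crosses from level $x\cdot l$ down below level $-N_0/2$ (in direction $-l$) before ever reaching level $N_0$ in direction $l$; hence $\{H_{\partial B_0}\ne H_{\partial_+ B_0}\} \subset \{H_{bN_0}^{-l} < H_{N_0}^{l}\}$ for a suitable fixed $b>0$ — one may simply take $l' = l$ and a scale $L$ proportional to $N_0$, choosing $b$ so that $bL$ dominates $N_0/2 + |x\cdot l|$ uniformly over $x\in\widetilde B_0$. (The lateral-exit contribution needs slightly more care; I would absorb it by noting that exiting laterally at $|(y)\cdot l_j| \ge 25 N_0^3 \gg N_0$ forces the walk to take many steps, and by the same token must involve a downward crossing first if it has not yet reached level $N_0$, or else can be handled by enlarging $b$ and using monotonicity in the box size. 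Alternatively one observes that any path from $\widetilde B_0$ that leaves $B_0$ without reaching $\partial_+ B_0$ can be concatenated with the half-space geometry so that, up to translating by $-x$, it lies in $\{H_{bL}^{-l'} < H_L^{l'}\}$.)

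Next I would invoke translation invariance of $\P$ (the remark after Definition \ref{def:Pbox}) to reduce the supremum over $x \in \widetilde B_0$ to a uniform bound, and then apply the hypothesis $\Pasymp_M \vert l$: since $\limsup_{L\to\infty} L^M P_0(H_{bL}^{-l'} < H_L^{l'}) = 0$, for every $\varepsilon>0$ there is $L_0$ so that $P_0(H_{bL}^{-l'} < H_L^{l'}) \le \varepsilon L^{-M}$ for $L \ge L_0$. Taking $L = c N_0$ for the proportionality constant $c$ fixed above, this gives $\sup_{x\in\widetilde B_0} P_x(H_{\partial B_0}\ne\partial_+ B_0) \le \varepsilon (cN_0)^{-M}$ for all $N_0$ large, and choosing $\varepsilon$ small enough that $\varepsilon c^{-M} < 1$ (and $N_0 \ge c_0$), the right-hand side is $< N_0^{-M}$, which is exactly \eqref{eq:singleBound}. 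Thus $\Pbox_M \vert l$ holds.

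The main obstacle I anticipate is the bookkeeping in the geometric containment of the first step — matching the precise box $B(0,0)$ of \eqref{eq:boxes}, with its asymmetric front/back extents $N_0$ and $N_0/2$ and its lateral width $25N_0^3$, against the half-space events $\{H_{bL}^{-l'}<H_L^{l'}\}$, and in particular making sure the lateral exits are genuinely covered (or shown negligible) rather than hand-waved; since $\Pasymp$ quantifies only over $b>0$ and directions $l'$ near $l$, one has enough freedom to pick $b$ large and $L \asymp N_0$, but one must check that no starting point $x\in\widetilde B_0$ and no lateral-exit trajectory escapes the bound. Everything else — translation invariance, the elementary inclusion of events, and the limsup-to-finite-$N_0$ passage — is routine.
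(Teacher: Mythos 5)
Your core geometric reduction is where the argument breaks. You want to show $\{H_{\partial B_0}\ne H_{\partial_+ B_0}\}\subset\{H_{bL}^{-l}<H_L^l\}$ with $l'=l$ and then ``absorb'' the lateral exits by taking $b$ large or by claiming a lateral exit ``must involve a downward crossing first.'' That claim is false: starting from $x\in\widetilde B_0$, a trajectory can drift sideways to $|y\cdot l_j|\ge 25N_0^3$ while its $l$-coordinate stays in, say, $[0,N_0)$, hence it exits $B_0$ through $\partial_s B_0$ without ever entering $\{y\cdot l<-bN_0\}$ for \emph{any} $b$. Enlarging $b$ only pushes the back half-space further away and gives you no control in the $l_j$-directions; ``monotonicity in box size'' does not repair this, since $\Pasymp$ as stated makes no assertion about lateral displacement when only $l'=l$ is used. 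So the single-direction version of \eqref{eq:PCond} simply does not dominate the event you need, and the remainder of your argument (Markov-to-averaged passage, limsup-to-finite-$N_0$) is resting on an inclusion that is not true.

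The missing idea is precisely the one flagged in the Definition \ref{def:PCond}: $\Pasymp_M\vert l$ lets you choose the direction $l'$ in a whole neighbourhood of $l$, not just $l'=l$. The paper exploits this by picking $d$ slightly tilted directions $l_1',\dots,l_d'$ near $l$ and a small $b>0$ so that the intersection $\bigcap_{j=1}^d\{H_{N_0}^{l_j'}<H_{bN_0}^{-l_j'}\}$, viewed from any starting point $x\in\widetilde B_0$, forces exit through $\partial_+B_0$; see \eqref{eq:wellBehaved}. The reason the tilting handles the sides is the extreme aspect ratio: $\widetilde B_0$ is only $O(N_0)$ deep but $B_0$ is $50N_0^3$ wide, so a tilt of order $N_0^{-2}$ suffices for the back half-space $\{y\cdot l_j'<-bN_0\}$ to be hit on the way to a lateral exit, while the front half-space $\{y\cdot l_j'>N_0\}$ still sits inside $\partial_+B_0$. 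Then the union bound over $j$ gives $P_x(H_{\partial B_0}\ne H_{\partial_+B_0})\le\sum_{j=1}^d P_x(H_{bN_0}^{-l_j'}<H_{N_0}^{l_j'})$, and each summand is $o(N_0^{-M})$ by \eqref{eq:PCond} applied to $l_j'$, yielding \eqref{eq:singleBound} for $N_0$ large. Your write-up of the ``limsup to finite $N_0$'' step is fine, but without the tilted-directions-plus-union-bound step the proof does not close.
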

\begin{remark} \label{rem:Pbox}
Due to this result, from now on we will only refer to condition $\Pbox.$
\end{remark}

\begin{proof}[Proof of Lemma \ref{lem:asympBox}]
If $\Pasymp_M \vert l$ holds, we can choose vectors $l_1', \dots, l_d'$ in a neighbourhood of $l$ and $b > 0$ small enough such that
\begin{enumerate}

\item
one has
\begin{equation} \label{eq:wellBehaved}
\bigcap_{j=1}^d \Big \{ H^{l_j'}_{N_0}  < H^{-l_j'}_{bN_0} \Big\} \cap \big \{X_0 \in \widetilde B_0 \big \} 
\subseteq \big \{H_{\partial B_0} = H_{\partial_+ B_0} \big \}
\end{equation}
(see Figure \ref{fig:boxAndFront} for an illustration also);

\item 
inequality
\eqref{eq:PCond} with $l$ replaced by $l_1', \ldots, l_d',$
holds true.
\end{enumerate}
Then for arbitrary 
$x \in \widetilde B_0,$ we have using \eqref{eq:wellBehaved} that 
\begin{align*}
P_x \Big( H_{\partial B_0} \ne H_{\partial_+ B_0} \Big) \le \sum_{j=1}^d 
P_x \Big ( H^{-l_j'}_{bN_0} < H^{l_j'}_{N_0} \Big ),
\end{align*}
and for $N_0$ large enough, the last sum can be bounded from above by $N_0^{-M}$ due to \eqref{eq:PCond}.
This implies \eqref{eq:singleBound} and hence finishes the proof.
\end{proof}

\begin{figure}[h]
\begin{center}
\large

\psfrag{tildeBk}{$\widetilde{B}_0$}
\psfrag{Bk}{$B_0$}
\psfrag{hatv}{$l$}
\psfrag{partialPlus}{$\partial_+ B_0$}
\psfrag{lN}[1]{$l'N_0$}
\psfrag{blN}{$bl'N_0$}

  \scalebox{.9}{\includegraphics{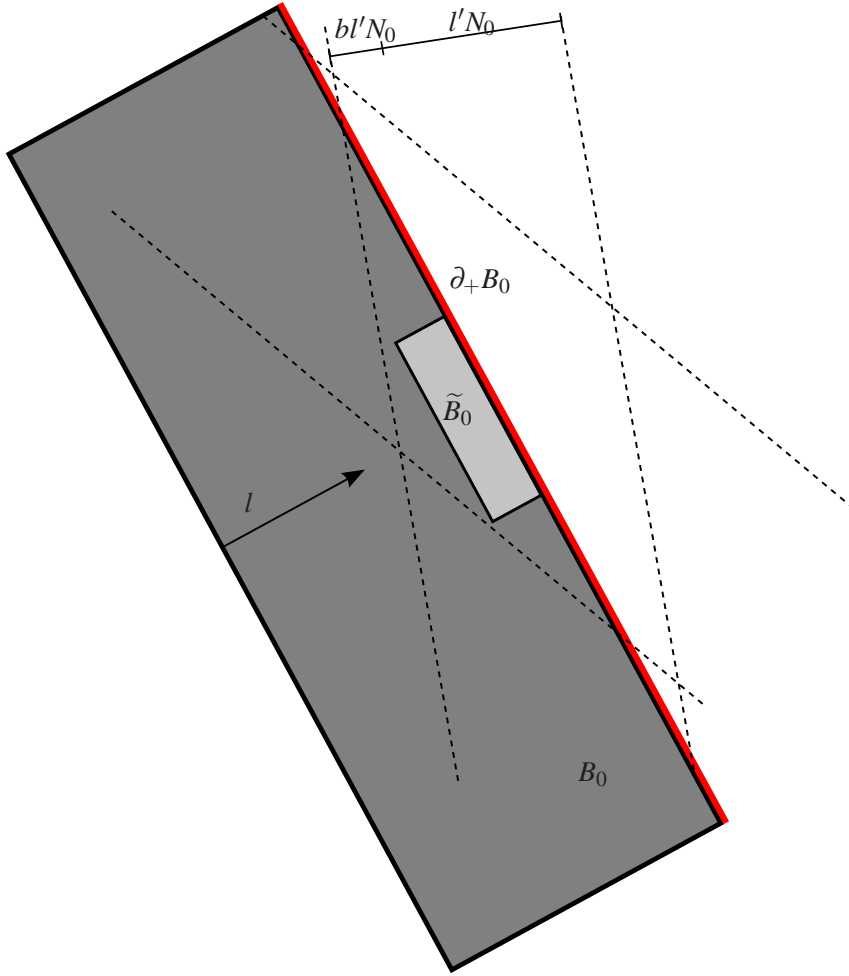}}
\caption{\sl 
A box $B_0$ and the middle frontal part $\widetilde B_0$. These boxes are much wider than they are long.
The dashed lines illustrate the slabs of width $(1+b)N_0$ in direction $l'$ from the definition of $\Pasymp_M \vert l$ (or similarly in the proof of Lemma \ref{lem:asympBox} also)
shifted by some $x \in \widetilde{B}_0.$
}
\label{fig:boxAndFront}
\end{center}
\end{figure}  

\begin{definition}\label{def:goodbad} {\bf (Good boxes)}.
We say that a box $B_0 \in \mathfrak{B}_0$ is \emph{good (with respect to $\omega \in \Omega$)} 
if
\begin{equation}
\inf_{x \in \widetilde B_0} P_{x,\omega} \big(H_{\partial
B_0} = H_{\partial_+ B_0} \big) \geq 1-N_0^{-5}.
\end{equation}
Otherwise, we say that the box is \emph{bad}.
For $k\ge 1$ we say that a box $B_k \in \mathfrak{B}_k$ is
\emph{good (with respect to $\omega \in \Omega$)}, if there is a box $Q_{k-1} \in \mathfrak{B}_{k-1}$
of scale $k-1$  such that every
box $B_{k-1} \in \mathfrak{B}_{k-1}$ of scale $k-1$ satisfying $B_{k-1} \cap Q_{k-1} = \emptyset$
and $B_{k-1} \cap B_k \ne \emptyset,$ is good (with respect to $\omega \in \Omega$).
Otherwise, we say that the box $B_k$ is \emph{bad}.
\end{definition}

We show that for $M$ large enough, condition $\Pbox_M \vert l$ implies that
boxes of scale $k$ are bad with a $\P$-probability decaying at least doubly-exponentially in $k,$ and start with the
case $k=0.$
\begin{lemma} \label{lem:goodAnnealedExitProb}
Let $l\in\mathbb S^{d-1}$
and assume that $\Pbox_M|l$ holds.
 Then  for all $B_0\in\mathfrak B_0$ and $N_0\ge c_0,$ one has that
\begin{equation}
\nonumber
 \P(B_0 \text{ is good})\geq 1- 2^{d-1} N_0^{3d+4-M}.
\end{equation}
\end{lemma}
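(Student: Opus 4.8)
The statement bounds the probability that a scale-$0$ box is bad, i.e.\ that $\inf_{x \in \widetilde B_0} P_{x,\omega}(H_{\partial B_0} = H_{\partial_+ B_0}) < 1 - N_0^{-5}$, given the condition $\Pbox_M \vert l$, which says that the \emph{averaged} probability $P_x(H_{\partial B_0} \ne H_{\partial_+ B_0})$ is at most $N_0^{-M}$ for every $x \in \widetilde B_0$. The plan is a straightforward union bound over the sites of $\widetilde B_0$ followed by a Markov (Chebyshev) inequality to pass from the averaged estimate to a quenched one.

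First I would write
\begin{equation*}
\P(B_0 \text{ is bad}) = \P\Big( \exists\, x \in \widetilde B_0 : P_{x,\omega}(H_{\partial B_0} \ne H_{\partial_+ B_0}) > N_0^{-5} \Big) \le \sum_{x \in \widetilde B_0} \P\Big( P_{x,\omega}(H_{\partial B_0} \ne H_{\partial_+ B_0}) > N_0^{-5} \Big).
\end{equation*}
For each fixed $x$, Markov's inequality applied to the non-negative random variable $\omega \mapsto P_{x,\omega}(H_{\partial B_0} \ne H_{\partial_+ B_0})$ gives
\begin{equation*}
\P\Big( P_{x,\omega}(H_{\partial B_0} \ne H_{\partial_+ B_0}) > N_0^{-5} \Big) \le N_0^{5}\, \E\big[ P_{x,\omega}(H_{\partial B_0} \ne H_{\partial_+ B_0}) \big] = N_0^{5}\, P_x(H_{\partial B_0} \ne H_{\partial_+ B_0}) \le N_0^{5-M},
\end{equation*}
where the last inequality is exactly $\Pbox_M \vert l$ (valid for $N_0 \ge c_0$), using that $\E[P_{x,\omega}(\,\cdot\,)] = P_x(\,\cdot\,)$ by definition of the averaged measure in \eqref{eq:averagedProb}.

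It remains to count the sites in $\widetilde B_0$. From the definition \eqref{eq:Btilde} with $k=0$ and the convention $N_{-1} = 2N_0/3$, the set $\widetilde B_0$ is contained in a box of side $N_{-1} = 2N_0/3$ in the direction $l$ and side $2N_0^{3}$ in each of the $d-1$ transverse directions, hence $\lvert \widetilde B_0 \rvert \le (2N_0/3)\cdot (2N_0^3)^{d-1} \le 2^{d-1} N_0^{\,3(d-1)+1} = 2^{d-1} N_0^{\,3d-2}$; I would just bound this crudely by $2^{d-1} N_0^{\,3d-1}$ or whatever power makes the bookkeeping match the claimed exponent. Combining, $\P(B_0 \text{ is bad}) \le 2^{d-1} N_0^{\,3d-1} \cdot N_0^{5-M} = 2^{d-1} N_0^{\,3d+4-M}$, which is the asserted bound after passing to the complement. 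There is no real obstacle here; the only point requiring minor care is getting the cardinality count of $\widetilde B_0$ to produce exactly the exponent $3d+4-M$ (the lattice/rounding in the definition means one should verify the integer-point count is dominated by the stated power), but this is routine bookkeeping rather than a genuine difficulty.
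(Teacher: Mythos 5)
Your proof is correct and is essentially identical to the paper's own argument: union bound over $x \in \widetilde B_0$, Markov's inequality in $\omega$, apply $\Pbox_M$, and count lattice points in $\widetilde B_0$. The paper uses the slightly cruder bound $\lvert \widetilde B_0 \rvert \le 2^{d-1} N_0^{3d-1}$ (the extra factor of $N_0$ comfortably absorbs the lattice/rotation boundary effects you flag), which is exactly what produces the exponent $3d+4-M$.
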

\begin{proof} Note that
\begin{equation}
\label{eq:unionBound}
 \P(B_0 \text{ is  bad})\leq 
\sum_{x\in\widetilde B_0}\P\Big(P_{x,\omega} \big(H_{\partial B_0}\ne H_{\partial_+ B_0} \big)
\ge N_0^{-5}\Big).
\end{equation}
Now by Markov's inequality we have for $x \in \widetilde{B}_0$ that
\begin{equation} \label{eq:Cheby}
\P\Big(P_{x,\omega} \big(H_{\partial
B_0} \ne H_{\partial_+ B_0} \big )\ge N_0^{-5} \Big) \le
N_0^{5}  \sup_{x \in \widetilde{B}_0} P_x \big (H_{\partial B_0} \ne H_{\partial_+ B_0} \big).
\end{equation}
In combination with \eqref{eq:unionBound} and \eqref{eq:Cheby}, 
assumption $\Pbox_M$  implies that
$$
 \P(B_0 \text{ is  bad})\le \vert \widetilde{B}_0 \vert  N_0^{5} N_0^{-M}
\leq 2^{d-1}N_0^{3d+4-M},
$$
where we used that $\vert \widetilde B_0 \vert \le 2^{d-1}N_0^{3d-1}$ due to \eqref{eq:Btilde},
which thus finishes the proof.
\end{proof}

Next we treat the case of a general $k \in \N_0.$

\begin{proposition}\label{prop:likelygood}
Let $l \in \mathbb{S}^{d-1},$ $M > \degAss,$ and assume that $\Pbox_{M} \vert l$ is satisfied.
Then for $N_0 \ge c_0$ one has for all $k \in \N_0$ and all $B_k \in \mathfrak{B}_k$ that
\begin{align}  \label{eq:ProbGoodBox}
\P(B_k \text{ is good}) \geq 1-\exp \{-2^k\}.
\end{align}
\end{proposition}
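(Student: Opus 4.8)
The plan is to proceed by induction on $k$, using the recursive structure of goodness in Definition \ref{def:goodbad}. The base case $k=0$ is essentially Lemma \ref{lem:goodAnnealedExitProb}: since $M > \degAss = 15d+5$, the bound $2^{d-1}N_0^{3d+4-M}$ is at most $\exp\{-2^0\} = e^{-1}$ for $N_0 \ge c_0$, because $c_0 = \largenessAss$ was chosen precisely to swallow these polynomial factors (one checks $3d+4-M < -(12d+1)$, so the bound decays like a large negative power of $N_0$, and $N_0^{-(12d+1)}$ times the combinatorial prefactor is comfortably below $e^{-1}$ for $N_0$ this large). So the real content is the inductive step.

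For the inductive step, fix a box $B_k \in \mathfrak{B}_k$ and suppose \eqref{eq:ProbGoodBox} holds at scale $k-1$. By Definition \ref{def:goodbad}, $B_k$ is \emph{bad} only if for \emph{every} choice of a ``sacrificial'' box $Q_{k-1} \in \mathfrak{B}_{k-1}$, there is a bad box $B_{k-1}$ of scale $k-1$ with $B_{k-1}\cap Q_{k-1} = \emptyset$ and $B_{k-1}\cap B_k \ne \emptyset$. First I would count the number $m_k$ of scale-$(k-1)$ boxes meeting $B_k$: from the dimensions in \eqref{eq:boxes}, \eqref{eq:latticeType} and the scale recursion \eqref{eq:NkDef}, this is polynomial in $N_0+k$ — roughly $(N_{k}/N_{k-1})\cdot(N_k^3/N_{k-1}^3)^{d-1}$ up to constants, which by \eqref{eq:NkDef} is of order $(N_0+k)^{2} \cdot (N_0+k)^{6(d-1)}$, i.e. bounded by $C(N_0+k)^{6d-4}$. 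The key combinatorial observation is that if $B_k$ is bad, then among these $m_k$ boxes there must be \emph{at least two} bad ones that are ``far apart'' in the sense that no single $Q_{k-1}$ can cover both — concretely, two bad scale-$(k-1)$ boxes that are disjoint. (If all bad scale-$(k-1)$ boxes meeting $B_k$ were pairwise intersecting, one could hope to choose $Q_{k-1}$ to hit them all; the geometry of the lattice $\mathcal L$ and the box shapes needs to be checked to make this precise, possibly using that mutually intersecting boxes of a fixed scale have a common point or at least lie in a bounded cluster, so finitely many $Q$'s — or one slightly enlarged one — suffice. This is the step I'd expect to need the most care.) Moreover, two \emph{disjoint} scale-$(k-1)$ boxes depend on disjoint sets of environment coordinates, hence their badness events are independent under \ref{item:IIDassumption}.

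Granting that, I would bound
\begin{equation*}
\P(B_k \text{ is bad}) \le \binom{m_k}{2} \P(B_{k-1} \text{ is bad})^2 \le m_k^2 \exp\{-2\cdot 2^{k-1}\} = m_k^2 \exp\{-2^k\},
\end{equation*}
using the induction hypothesis and independence. It then remains to absorb the prefactor: I need $m_k^2 \exp\{-2^k\} \le \exp\{-2^k\} \cdot (\text{something} \le 1)$... which is false as stated, so the bound must be run slightly more cleverly — the standard renormalisation trick is to carry a bound of the form $\exp\{-2^k\}$ with room to spare at $k=0$ (we have $e^{-1}$ versus a much smaller base-case probability), and show $m_k^2 \exp\{-2^k\} \le \exp\{-2^{k}\}$ fails but $m_k^2 \le \exp\{2^{k-1}\}$ \emph{does} hold for all $k\ge 1$ once $N_0 \ge c_0$, since $m_k^2 \le C^2(N_0+k)^{12d-8}$ grows only polynomially in $k$ while $\exp\{2^{k-1}\}$ grows doubly exponentially — and for $k=1$ one uses the largeness of $N_0$ (this is again where $c_0 = \largenessAss$ enters: $(\ln\kappa)^2$ and the exponential make $N_0$ large enough that $C^2 N_0^{12d-8} \le e$). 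Chaining $\P(B_k\text{ bad}) \le m_k^2 \P(B_{k-1}\text{ bad})^2$ from the base case and checking that the resulting product telescopes to something $\le \exp\{-2^k\}$ completes the induction. The main obstacle, as noted, is the geometric/combinatorial lemma that a bad $B_k$ forces two \emph{disjoint} bad sub-boxes — everything else is bookkeeping with the explicit constants, for which the value of $c_0$ has been engineered.
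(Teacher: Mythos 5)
Your proposal follows essentially the same approach as the paper: induct on $k$, use that a bad $B_k$ forces two disjoint bad scale-$(k-1)$ sub-boxes, apply independence from \ref{item:IIDassumption} to get $q_k \le \binom{m_k}{2}q_{k-1}^2$, and absorb the polynomial prefactor by exploiting the strong base case. Two remarks. First, the step you flag as delicate — extracting two \emph{disjoint} bad sub-boxes from a bad $B_k$ — is actually immediate from Definition \ref{def:goodbad}: if all bad scale-$(k-1)$ boxes meeting $B_k$ pairwise intersected, you could take $Q_{k-1}$ to be any one of them and conclude $B_k$ is good, so no lattice geometry is needed. Second, as you correctly observe, the naive hypothesis $q_k \le \exp\{-2^k\}$ does not close against the factor $m_k^2$; the paper makes your telescoping idea precise by proving the strengthened hypothesis $q_k \le \exp\{-c_k' 2^k\}$ with $c_k' = c_0' - \sum_{j\le k} 2^{-j}\ln m_j^2$, and then separately verifying $\inf_k c_k' \ge 1$ using that $M > \degAss$ gives $c_0' \ge (12d+2/3)\ln N_0$ while the tail sum is at most $(12d+1/2)\ln N_0$ once $N_0 \ge c_0$ — this is exactly the bookkeeping you defer, and it is the only place the specific value of $c_0$ enters.
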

\begin{proof}
For the sake of simplicity, denote $p_k := \P(B_k \text{ is good})$ for $B_k$ as in the assumptions
as well as $q_k := 1-p_k.$
We
will prove by induction that
\begin{equation} \label{eq:indStep}
 \P(B_k \text{ is good}) \geq 1-\exp \{-c_k'2^k\}
\end{equation}
for all $k \in \N_0,$ where 
$$
c_k' := \Big( 12d+\frac{2}{3} \Big ) \ln N_0 - \sum_{j=1}^k \frac{\ln ((90(j+N_0))^{12d})}{2^j}.
$$
Afterwards, we will then show that for $N_0$ as in the assumptions one has that $\inf_{k \ge 0} c_k' \ge 1,$ which will finish the proof.

\emph{Induction start:}

Lemma \ref{lem:goodAnnealedExitProb} yields that
\begin{align*}
\P (B_0 \text{ is good})  \ge 1 - \exp \big\{ -(12d + 2/3) \ln (N_0) \big\},
\end{align*}
which in particular implies that \eqref{eq:indStep} holds for $k=0.$

\emph{Induction step:}

Assume $k \geq 1$ and that \eqref{eq:indStep} holds for $k-1.$
Let  $q_{k-1}=\exp\{-c_{k-1}'2^{k-1} \}$ and
let $B_{k-1,1},B_{k-1,2},\ldots,B_{k-1,m_k}$ be all the boxes in $\mathfrak B_{k-1}$ that intersect $B_k$.
By Definition \ref{def:goodbad}, if each two bad boxes among $B_{k-1,1},B_{k-1,2},\ldots,B_{k-1,m_k}$
have a non-empty intersection, then the box $B_k$ is good.

Therefore, all  we need to do is to upper bound the probability that there exist two non-intersecting
boxes among $B_{k-1,1},B_{k-1,2},\ldots,B_{k-1,m_k}$ which are bad. By the union bound and assumption
\ref{item:IIDassumption} we get that
\[
q_k\leq {m_k \choose 2}q_{k-1}^2.
\]
Noting that for all $k \ge 1$ we have $m_k\leq (30\cdot 3(k+N_0))^{6d},$
the induction hypothesis yields
\begin{eqnarray*}
q_k\leq (90(k+N_0))^{12d}\big( \exp \{-c'_{k-1}2^{k-1} \}\big)^2
=\exp \big\{ \ln ((90(k+N_0))^{12d}) - c'_{k-1}2^{k}\big\},
\end{eqnarray*}
so $c'_k = c'_{k-1}-\frac{\ln ((90(k+N_0))^{12d})}{2^k}$ and hence inductively for every $k$,
\begin{equation} \label{eq:ckIneqI}
 c'_k\geq c'_0 - \sum_{j=1}^\infty \frac{\ln ((90(j+N_0))^{12d})}{2^j}.
\end{equation}
The sum obviously converges, but we need to compare it with the value of $c'_0$.
By Lemma \ref{lem:goodAnnealedExitProb} and since $M \ge \degAss,$ we deduce that for $N_0$ as in the assumptions,
\begin{equation} \label{eq:c0IneqII}
 c'_0\geq (12d+2/3)\ln N_0.
\end{equation}
To estimate the sum, we note that due to 
$
\ln (1+x+y) \leq \ln(1+x) + \ln (1+y),
$
for $x,y \geq 0,$ we have
for $N_0$ as in the assumptions  that
\begin{align} \label{eq:ckIneqIII}
\sum_{j=1}^\infty \frac{\ln \big((90(j+N_0))^{12d} \big)}{2^j}
& \leq 12d \Big( \ln 90 + \sum_{j=1}^\infty \frac{\ln (j+N_0)}{2^j} \Big)
\leq (12d+1/2)  \ln N_0.
\end{align}
Therefore, in combination with
\eqref{eq:ckIneqI} to \eqref{eq:ckIneqIII} it follows that
\[
c'_k \geq c:=c'_0 - \sum_{j=1}^\infty \frac{\ln \big ( (90(j+N_0))^{12d} \big)}{2^j}\geq \frac16 \ln N_0 >1,
\]
for every $k$, and where the last inequality holds since $N_0 \ge c_0,$ where $c_0$ as in \eqref{eq:c0Def}.
 Hence, ${q_k\leq \exp\{-2^k \}}$
as desired.
\end{proof}

Next we show that with high probability, a walker starting in the middle frontal part of a good box
leaves it through the frontal boundary part. 
For this purpose, we define the back boundary part
$$
\partial_- B(x,k) := \Big \{y \in \partial B(x,k) : (y-x) \cdot l \le -N_k/2,
\vert (y-x)\cdot l_j \vert
< 25N_k^{3} \; \; \forall \, 2 \le j \le d \Big\}
$$
as well as the side boundary part
$$
\partial_s B(x,k) := \partial B(x,k) \backslash \big( \partial_+B(x,k) \cup \partial_-B(x,k) \big).
$$

\begin{proposition} \label{prop:GoodBoxRightExit}
Let $N_0 \ge c_0,$ with $c_0$ as in \eqref{eq:c0Def}.
Then there is a constant $c_4>0$ such that for each $k\in \N_0$ and
$B_k\in\mathfrak B_k$ which is good with respect to $\omega,$ one has

\begin{equation*} 
\sup_{x\in \widetilde{B}_k}P_{x,\omega} \big( H_{\partial B_k}\neq H_{\partial_+ B_k} \big)
\le \exp\left\{-c_4N_k\right\}.
\end{equation*}

\end{proposition}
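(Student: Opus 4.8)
The plan is to prove the claim by induction on the scale $k$, with the induction start $k=0$ being essentially a restatement of the definition of a good box (Definition \ref{def:goodbad}): there $\inf_{x\in\widetilde B_0}P_{x,\omega}(H_{\partial B_0}=H_{\partial_+ B_0})\ge 1-N_0^{-5}$, so choosing $c_4$ small enough that $N_0^{-5}\le e^{-c_4 N_0}$ for all $N_0\ge c_0$ (possible since $N_0\ge c_0$ is huge) gives the case $k=0$. For the induction step, fix a good box $B_k\in\mathfrak B_k$ and let $Q_{k-1}\in\mathfrak B_{k-1}$ be the ``excluded'' box from Definition \ref{def:goodbad}, so that every box of scale $k-1$ meeting $B_k$ but disjoint from $Q_{k-1}$ is good. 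The walk starting in $\widetilde B_k$ will be followed along successive scale-$(k-1)$ boxes: at each step the walker is in the middle frontal part of some scale-$(k-1)$ box (this is where Remark \ref{rem:coverProp} enters — the middle frontal parts $\widetilde B_{k-1}$ cover $\Z^d$), it exits that box, and by the induction hypothesis, provided the box is good, it exits through the frontal boundary part with probability at least $1-e^{-c_4 N_{k-1}}$, so it makes definite progress of order $N_{k-1}$ in direction $l$ (while the transverse displacement per step is $O(N_{k-1}^3)$, which stays within the much wider box $B_k$ as long as the number of steps is controlled).

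The core estimate is then a deterministic counting bound combined with a union bound over failures. Since $B_k$ has length $\asymp N_k = 3(N_0+k-1)^2 N_{k-1}$ in direction $l$ and each good-box traversal advances the walker by at least $\asymp N_{k-1}$, the walker needs at most $O((N_0+k-1)^2)$ successful traversals to exit $B_k$ through $\partial_+ B_k$; one also checks that the accumulated transverse fluctuation over this many steps — each of size $O(N_{k-1}^3)$ — is at most $O((N_0+k-1)^2 N_{k-1}^3)$, which is far smaller than the transverse half-width $25 N_k^3 \asymp N_{k-1}^3 (N_0+k-1)^6$ of $B_k$, so the walker cannot exit through the side boundary $\partial_s B_k$. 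The only remaining ways to fail are (i) to encounter the bad box $Q_{k-1}$ — but $Q_{k-1}$ occupies a region of diameter $O(N_{k-1}^3)$ in the transverse directions, so by a crossing/reflection-type argument the walker can be routed around it, or more simply: since only scale-$(k-1)$ boxes disjoint from $Q_{k-1}$ are guaranteed good, one shows that a walker that never enters $Q_{k-1}$ only ever sits in good boxes; the probability of ever entering $Q_{k-1}$ before exiting $B_k$ can be absorbed because at each step the ``good'' boxes steer the walker forward past the location of $Q_{k-1}$ — or (ii) to have one of the $O((N_0+k-1)^2)$ good-box traversals exit through the wrong face, which by the induction hypothesis and a union bound has probability at most $C(N_0+k-1)^2 e^{-c_4 N_{k-1}}$.

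Putting the two contributions together, the failure probability for the scale-$k$ box is bounded by
$$
C(N_0+k-1)^2\,e^{-c_4 N_{k-1}} \le e^{-c_4 N_k}
$$
where the last inequality uses $N_k = 3(N_0+k-1)^2 N_{k-1} \ge 3(N_0+k-1)^2 N_{k-1}$ together with $N_0\ge c_0$, so that the gain of a factor $3(N_0+k-1)^2$ in the exponent of the exponential dominates the polynomial prefactor $C(N_0+k-1)^2$ (here it is essential that $c_4$ was fixed small and $N_0$ is large, and one may need to shrink $c_4$ once more, independently of $k$, to make the first traversal's contribution fit). The main obstacle I anticipate is the careful bookkeeping in step (i): making precise the claim that a walker confined to good scale-$(k-1)$ boxes, started in $\widetilde B_k$, deterministically progresses frontally and avoids both the side boundary of $B_k$ and the excluded box $Q_{k-1}$ — this requires matching the geometric parameters in \eqref{eq:boxes}, \eqref{eq:Btilde}, \eqref{eq:partialPlusDef} and \eqref{eq:NkDef} precisely (widths $25N_k^3$ versus $N_k^3$, the recursion factor $3(N_0+k)^2$, and the definition $N_{-1}=2N_0/3$), and checking that the constant $25$ is generous enough to accommodate all accumulated transverse motion. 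The strong double-exponential control on bad boxes from Proposition \ref{prop:likelygood} is not needed here — this proposition is a quenched statement valid for \emph{any} good box — but it is what makes the conclusion useful when combined with this proposition later.
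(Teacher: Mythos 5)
Your overall plan — induction on the scale $k$, induction start from the definition of a good box, and an induction step that follows the \emph{rescaled} random walk $(Y_n)$ obtained by recording exits from scale-$(k-1)$ boxes — is exactly the skeleton of the paper's proof, and you correctly identify where Remark~\ref{rem:coverProp} and the geometry of \eqref{eq:boxes}--\eqref{eq:partialPlusDef} enter. However, the quantitative core of your argument is wrong, and the mistake is fatal rather than cosmetic.

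The inequality you write at the end,
\[
C(N_0+k-1)^2\,e^{-c_4 N_{k-1}} \;\le\; e^{-c_4 N_k},
\]
goes in the wrong direction. Since $N_k = 3(N_0+k-1)^2 N_{k-1} \gg N_{k-1}$, the right-hand side $e^{-c_4 N_k}$ is \emph{much smaller} than even a single factor $e^{-c_4 N_{k-1}}$, let alone the left-hand side which has a polynomial prefactor on top of that. A union bound over the $O((N_0+k-1)^2)$ scale-$(k-1)$ traversals — which is what your ``failure probability for the scale-$k$ box is bounded by $C(N_0+k-1)^2 e^{-c_4 N_{k-1}}$'' amounts to — simply cannot produce a bound of order $e^{-c_4 N_k}$. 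To improve the exponent by a factor of roughly $N_k/N_{k-1}$, one must argue \emph{multiplicatively}: the event of exiting $B_k$ through the wrong face must be shown to force on the order of $N_k/N_{k-1}$ scale-$(k-1)$ failures, whose (conditional) probabilities then \emph{multiply}, giving $\bigl(e^{-cN_{k-1}}\bigr)^{\Theta(N_k/N_{k-1})}=e^{-c N_k}$. This is precisely what the paper does: it shows via a deterministic counting lemma (the $J_k$-block argument, Markov property applied at multiples of $J_k$) that a walker leaving through the side of $B_k$ must, during each of roughly $N_k/(20N_{k-1})$ blocks of $J_k$ consecutive $Y$-steps, exit at least $20$ scale-$(k-1)$ boxes through a non-frontal face, and for the back exit it compares the $l$-coordinate of $Y$ to an explicit one-dimensional drifted walk $Z$ and runs a gambler's-ruin computation. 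Your sketch contains no such multiplicative mechanism, so it proves only a bound of order $e^{-c_4 N_{k-1}}$, which does not close the induction.

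A second, related gap is the treatment of the excluded bad box $Q_{k-1}$. You say the walker ``can be routed around it'' or that the probability of entering it ``can be absorbed because the good boxes steer the walker forward past the location of $Q_{k-1}$.'' That is not an argument: $Q_{k-1}$ can sit directly between the starting point in $\widetilde B_k$ and the front face, and good boxes do not steer the walker transversely around it with probability one. The paper handles this carefully and differently in the two sub-cases: for side exits it thickens $Q_{k-1}$ to $\mathcal Q_{k-1}$ and only starts the multiplicative counting at the stopping time $m_1$ when the walker is far (in the $l^\perp$-directions) from both $\mathcal Q_{k-1}$ and $\partial_s B_k$; for back exits it builds a one-dimensional comparison walk $Z$ whose transition probabilities inside the strip $[[L_{Q_{k-1}},R_{Q_{k-1}}]]$ use only uniform ellipticity (a $\kappa^{c_1 N_{k-2}}$ cost, not the inductive estimate), and shows this cost is absorbed by the exponential gain accumulated over the $\sim N_k/N_{k-1}$ forward steps. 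You would need to reproduce something like this; it is not a bookkeeping detail but the reason why a single bad block of scale $k-1$ does not ruin the estimate. Finally, you also omit the split of $\{H_{\partial B_k}\ne H_{\partial_+ B_k}\}$ into the side-exit and back-exit events; the two require genuinely different arguments, so this split is structural, not cosmetic.
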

\begin{proof} For the sake of simplicity, we assume without loss of generality that $B_k=B(0,k).$
Using that
$$
P_{x,\omega}  \big (H_{\partial B_k}\ne H_{\partial_+ B_k} \big)
\le P_{x,\omega} \big( H_{\partial B_k}=H_{\partial_{s} B_k} \big)
+ P_{x,\omega} \big( H_{\partial B_k}=H_{\partial_- B_k}\big),
$$
we split the proof into two parts.
We will first prove that
\begin{equation}\label{eq:sideExitProbEst}
\sup_{x\in \widetilde{B}_k} P_{x,\omega}\big(H_{\partial B_k}=H_{\partial_{s} B_k} \big) \le
\exp\{-cN_k\}
\end{equation}
and then that
\begin{equation}\label{eq:back}
\sup_{x\in \widetilde B_k}P_{x,\omega} \big( H_{\partial B_k}=H_{\partial_- B_k} \big) \le
\exp\{-cN_k\},
\end{equation}
for some constant $c_4:= c >0,$
which will finish the proof.
To prove \eqref{eq:sideExitProbEst} and \eqref{eq:back} we proceed as follows:
Define the sequences $(c_k')_{k \in \N_0}$ and $(c_k'')_{k \in \N_0}$
via
\begin{equation*} 
c_k':= \frac{5 \ln N_0}{N_0} - \sum_{j=1}^k \frac{ \ln 30(N_0+j)^4}{N_{j-1}}
\end{equation*}
and
\begin{align*} 
c_k''
=\frac{5 \ln N_0}{N_0} - \sum_{j=1}^k \left(\frac{4\ln 3}{N_{j-1}} +
\frac{2N_{j-1}  + 3 N_{j-1} \ln 3  + 2 \ln 6 - 3c_1 N_{j-1} \ln \kappa}{N_j} \right).
\end{align*}
We will show that
\begin{equation} \label{eq:quenchedSideExitEst}
\sup_{x\in \widetilde{B}_k} P_{x,\omega} \big( H_{\partial B_k}=H_{\partial_{s} B_k} \big)\le
\exp\{-c_k'N_k\}
\end{equation}
and
\begin{equation} \label{eq:backSideExitEst}
\sup_{x\in \widetilde{B}_k} P_{x,\omega} \big(H_{\partial B_k}=H_{\partial_{-} B_k} \big)\le
\exp\{-c_k''N_k\}
\end{equation}
hold true for all $k \in \N_0.$
Displays \eqref{eq:sideExitProbEst} and \eqref{eq:back} will then follow since 
$$
c:= \inf_{k \in \N_0}(c_k') \wedge \inf_{k \in \N_0}(c_k'')
> 0
$$
for $N_0 \ge c_0.$

\emph{Induction start:}

For $k=0,$ displays  \eqref{eq:quenchedSideExitEst} and \eqref{eq:backSideExitEst}
follow from the definition of a good box at scale $0.$ 

\emph{Induction step:}

Now assume that \eqref{eq:quenchedSideExitEst} and \eqref{eq:backSideExitEst} hold
for scale $k-1$ where $k \geq 1.$

\emph{Proof of \eqref{eq:quenchedSideExitEst} for $k:$}

Let $\tau_1$ be the first time that the random walk leaves one of the
boxes of scale $k-1$ whose middle frontal parts contain the starting point $x \in \widetilde B_k$.
Define recursively for $n\ge 1$ the stopping time $\tau_{n+1}$
as the first time that the random walk leaves the box of scale $k-1$
whose middle frontal part contains the point $X_{\tau_n}$ (this is where we take advantage of
 Remark \ref{rem:coverProp}). If there is more than
one such box, then we choose one arbitrarily. We now consider the sequence
defined by 
\begin{equation}\label{eq:defresc}
Y_0:=x \quad \text{ and } \quad Y_n:=X_{\tau_n}, \quad \text{ for } n \in \N,
\end{equation}
and call $(Y_n)$ the {\em rescaled random walk}.

Since the box $B_k$ is good, we know that there exists  a
box $Q_{k-1} \in \mathfrak{B}_{k-1}$ such that every box of scale $k-1,$ intersecting
$B_k$ but not $Q_{k-1},$ is good.
With this notation we define
\begin{align*} 
 \begin{split}
 \mathfrak{B}_{Q_{k-1}} := \Big \{&B_{k-1} \in\mathfrak B_{k-1}: B_{k-1} \cap B_k \ne \emptyset \text{ and }
\text{ there exists } 
z\in B_{k-1} \text{ such that } \vee_{l^\bot}(z, Q_{k-1}) < 1 \Big\};
\end{split}
\end{align*}
here, for a set $A \subset \Z^d$ and $x \in \Z^d$ we use the notation 
$$
\vee_{l^\bot}(x,A) := \max_{2\le j \le d} \inf_{y \in A} \vert (x-y) \cdot l_j \vert.
$$
I.e.,
$\mathfrak{B}_{Q_{k-1}}$ is the collection of
boxes of scale $k-1$ which,
orthogonally to $l,$ are very close to 
$Q_{k-1}.$
For later reference we also introduce
 Next, we define
$$
\mathcal{Q}_{k-1} := \bigcup_{B_{k-1} \in \mathfrak{B}_{Q_{k-1}}} B_{k-1},
$$
 see Figure \ref{fig:mathcala}.
\begin{figure}[h]
\begin{center}
\huge

\psfrag{hatv}{$l$}
\psfrag{Bk}{$B_k$}
\psfrag{Qk-1}{$\mathcal{Q}_{k-1}$}
\psfrag{calQk-1}{$Q_{k-1}$}

 \scalebox{.6}{\includegraphics{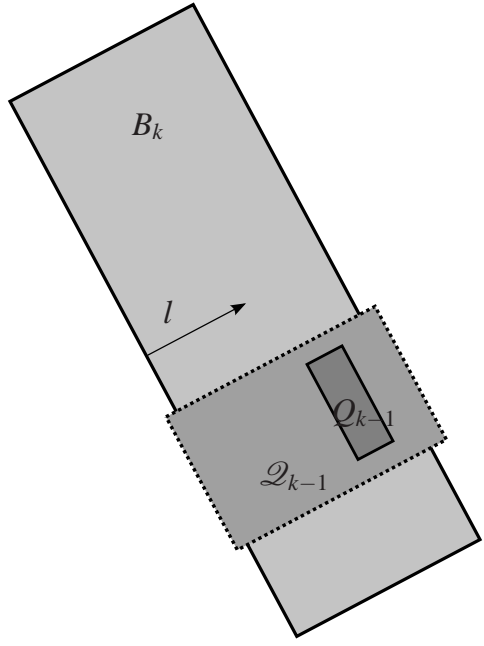}}
\caption{\sl 
The bad box $Q_{k-1}$ and its superset $\mathcal Q_{k-1}$ inside the box $B_k$.
}
\label{fig:mathcala}
\end{center}
\end{figure} 

Let now $m_1$ be the first time that the random walk $(Y_n)$
is at a distance larger than $7N_{k}^{3}$ from $\mathcal  Q_{k-1}$ and
from the sides $\partial_{s}B_k$ of the box $B_k$, in the sense that
$$
m_1:=\inf \Big \{n \in \N_0 : \vee_{l^\bot}\big(Y_n,{\mathcal Q}_{k-1}\big)\ge 7N_k^{3} \quad \text{and} \quad
\vee_{l^\bot} \big(Y_n,\partial_{s}B_k \big)\ge 7N_k^{3} \Big \}.
$$
Define $m_2$ as the first time that $(Y_n)$ exits
the box $B_k$ so that
$$
m_2:=\inf\{n \in \N_0, Y_n\notin B_k\}.
$$

Furthermore, we define 
\begin{equation*}
m_3 := \inf\{n>m_1: Y_n\in {\mathcal Q}_{k-1}\}\leq\infty
\end{equation*}
and
note that  on the event $\big\{ H_{\partial B_k}=H_{\partial_{s}B_k} \big\},$
we have that $P_{x,\omega}$-a.s., 
\begin{equation*}
m_1<m_2<\infty.
\end{equation*}
Therefore, $m':=(m_2 \wedge m_3) \circ \theta_{m_1}$ is well-defined on that event
and writing
$$
J_k:=2 \cdot \frac{3N_k /2}{N_{k-2}+1},
$$
one has $P_{x,\omega}(\cdot \, \vert \, H_{\partial B_k}=H_{\partial_{s}B_k})$-a.s. that
\begin{equation} \label{eq:mDifferences}
m' \geq \frac{7N_{k}^{3}}{30N_{k-1}^{3}} \ge J_k \frac{N_k}{20N_{k-1}}.
\end{equation}
Next observe that, again on $\{ H_{\partial B_k}=H_{\partial_{s}B_k}\},$ if $(Y_n)$ starts from some $y \in B_k$ such that
$$
\min \big\{ \vee_{l^\bot} (y, \partial_s B_k), \vee_{l^\bot} (y, \mathcal Q_{k-1}) \big\} \ge 30N_{k-1}^3 J_k,
$$
and if it consecutively leaves $J_k$ boxes of scale $k-1,$
then at least 
$20$ (we could do significantly better here --- however, since this is sufficient for our purposes, we leave it this way for the sake of simplicity)
such boxes must have been left through the frontal parts of their boundaries.\footnote{Indeed,
for each box of scale $k-1$ that $(Y_n)$ (feels and) leaves through its frontal boundary part, the position of the
walk gains at least $N_{k-2}-1$ units in direction $l.$ The ``most efficient'' way to decrease its position in $l$-direction is to leave a box
of scale $k-1$ through its back or side boundary part, which would decrease the $l$-coordinate of its position by at most $N_{k-1}+1.$ Therefore, 
if $(Y_n)$ has not left $B_k$ through its frontal or back boundary part within $J_k$ steps, then it must have left at least
$$
\frac{J_k - \frac32 N_k/N_{k-1}}{N_{k-1}+1} \ge 20
$$
boxes of scale $k-1$ not through their frontal boundary part.
}
Thus,
we have
that
$$
P_{y,\omega} \big(Y_j \in B_k \; \forall j \in \{1, \ldots, J_k\}  \big)
\le J_k^{20} \big( \exp\{-c_{k-1}' N_{k-1}\} \big)^{20}.
$$
This in combination with \eqref{eq:mDifferences} and the Markov property applied at times which are multiples of $J_k$
supplies us with
\begin{align*}
  P_{x,\omega} (H_{\partial B_k}=H_{\partial_{s}B_k}) 
&= P_{x,\omega} \Big(m' \ge J_k \frac{N_k}{20N_{k-1}}, H_{\partial B_k}=H_{\partial_{s}B_k} \Big)\\
&\le \big(\exp\{ -20c_{k-1}'N_{k-1} + 20 \ln J_k \} \big)^{\frac{N_k}{20N_{k-1}}} \le \exp \{ -c_{k}' N_k \}.
\end{align*}
This completes the proof of \eqref{eq:quenchedSideExitEst} for $k.$

\medskip
\emph{Proof of \eqref{eq:backSideExitEst} for $k:$}

In addition to the induction assumption that \eqref{eq:quenchedSideExitEst} and \eqref{eq:backSideExitEst} hold for scale $k-1,$ we can now assume that
\eqref{eq:quenchedSideExitEst} holds for scale  $k$ also.

The proof is based on a comparison of the ``$l$-coordinate'' of the rescaled random walk $(Y_n)$ with a one-dimensional walk
with drift.

%
%

Assume the statement holds for $k-1$ with $k\geq 1.$
Let $B_k \in \mathfrak{B}_k$ be a good box of scale $k$ (which again for the sake of simplicity is supposed to be of the form
$B_k=B(0,k)$ without loss of generality). Then there exists a box
$Q_{k-1} \in \mathfrak{B}_{k-1}$ of scale $k-1$ such that every box of scale $k-1$ that intersects $B_k,$ but not $Q_{k-1},$ is good.
Let
\begin{align}\label{eq:defTa}
 \begin{split}
L_{Q_{k-1}}&:=\min\big \{ l \cdot z -N_{k-2}-1 \, : \, z\in Q_{k-1} \big \},\\
 R_{Q_{k-1}}&:=\max \big \{ l \cdot z + N_{k-1}/2 +1 \,: \, z\in Q_{k-1} \big \}
\le L_{Q_{k-1}}+ 3N_{k-1},\\
H^{(Q_{k-1})}&:=\inf \big \{n \in \N_0: X_n \cdot l \in [L_{Q_{k-1}}, R_{Q_{k-1}}] \big \} \leq\infty.
\end{split}
\end{align}

As alluded to, we will make use of a one-dimensional random walk with drift which, at every unit of time, being at
$x \in \Z \backslash [L_{Q_{k-1}},R_{Q_{k-1}}],$
moves $N_{k-2}$ steps
to the right with probability ${1-e^{-c_{k-1}''N_{k-1}}}$ and $N_{k-1}$
steps to the left with probability ${e^{-c_{k-1}''N_{k-1}}.}$ 
For $a, b \in \R$ with $a < b$ we use the notation $[[a,b]] := [a,b] \cap \Z.$
Then from any
$x \in [[L_{Q_{k-1}},R_{Q_{k-1}}]],$ it jumps $N_{k-2}$
steps to the right with probability $\kappa^{c_1 N_{k-2}}$ and $N_{k-1}$ steps to the left
 with probability ${1-\kappa^{c_1 N_{k-2}}.}$ 
Denote such a walk by $(Z_n)$ and by $P_y$ the corresponding probability measure conditional on ${\{Z_0 = y\}.}$

We start with proving the estimates
\begin{align} \label{eq:befta}
\begin{split}
\sup_{x \in \widetilde{B}_k} &P_{x,\omega}\big( H^{(Q_{k-1})} < H_{\partial_+B_k} \wedge H_{\partial_s B_k} \big)\\
&\leq \exp\{ -c_k'N_k \} + 3\exp \Big \{ -\Big (c_{k-1}''- \frac{\ln 2 }{N_{k-1}} \Big) (N_k-2N_{k-1}-R_{Q_{k-1}}) \Big\}
\end{split}
\end{align}
and
\begin{equation}\label{eq:afta}
\sup_{y \in [[L_{Q_{k-1}}, R_{Q_{k-1}}]]}
P_{y} \big( H^{-e_1}_{N_k/2}(Z) < H^{e_1}_{N_k}(Z) \big) \leq 6\kappa^{-3c_1 N_{k-1}}
\big( 
\exp\{ -c_{k-1}''N_{k-1} \} \big)^{\frac{L_{Q_{k-1}}+N_k/2}{N_{k-1}}},
\end{equation}
from the combination of which we will be able to deduce \eqref{eq:backSideExitEst}.

To see \eqref{eq:befta}, observe that
the left-hand side of \eqref{eq:befta} can be estimated from above by
\begin{equation} \label{eq:exitProbDecompEst}
 \sup_{x \in \widetilde{B}_k} P_{x,\omega} (H_{\partial B_k} = H_{\partial_s B_k})
+ \sup_{x \in \widetilde{B}_k} P_{x,\omega} \Big( H^{(Q_{k-1})} < H_{\partial_+ B_k}, H_{\partial B_k} \ne H_{\partial_s B_k} \Big).
\end{equation}
The first probability can be estimated from above by $\exp\{ -c'N_{k} \}$ using \eqref{eq:quenchedSideExitEst}.

Note that on the event in the second probability,
up to time 
$H^{(Q_{k-1})}$ the random walk $(X_n)$ (and hence $(Y_n)$) only visits good boxes of scale $k-1$. 
Therefore, using the induction hypothesis \eqref{eq:backSideExitEst} in combination with a comparison of the exit probabilities for $Y_\cdot \cdot l$ with
those for $Z_\cdot,$
we get that with \eqref{eq:exitProbDecompEst}, the left-hand side of \eqref{eq:befta} can be estimated from above by
\begin{align*}
\sup_{x \in \widetilde{B}_k} &P_{x,\omega}
\big (H^{(Q_{k-1})} 
< H_{\partial_+B_k} \wedge H_{\partial_s B_k} \big)\\
&\leq \exp\{ -c'_kN_k \} + \sup_{x \in \widetilde{B}_k} P_{\lfloor x\cdot l \rfloor} \Big (H_{R_{Q_{k-1}}}^{-e_1}(Z) < H_{N_k}^{e_1} (Z) \Big)\\
 &\leq \exp\{ -c'_kN_k \} 
+ 3\sup_{x \in \widetilde{B}_k} \big( \exp \big \{-c_{k-1}'' N_{k-1} + \ln 2 \big\} \big)^{ \frac { \lfloor x \cdot l \rfloor -(R_{Q_{k-1}}+ N_{k-1})}{N_{k-1}}}\\
&\leq \exp\{ -c_k'N_k \} + 3\exp \Big \{ -\Big (c_{k-1}''- \frac{\ln 2 }{N_{k-1}} \Big) (N_k-2N_{k-1}-R_{Q_{k-1}}) \Big\},
\end{align*}
for $N_0$ as in the assumptions, and where the penultimate inequality follows from one-dimensional random walk calculations.
Hence, \eqref{eq:befta} follows.

To see \eqref{eq:afta}, let $y \in [[L_{Q_{k-1}}, R_{Q_{k-1}}]]$ 
and define the events
\begin{equation*}
D^+:=\big \{H_{N_k}(Z) <H_y\circ \theta_1 (Z)
\big\}
\quad \text{ and } \quad
D^-:=\big\{H_{-N_k/2}(Z) <H_y\circ \theta_1(Z)
\big\},
\end{equation*}
with $\theta$ as defined in \eqref{eq:shiftDef}.
Observing that
\begin{align} \label{eq:geomProbDec}
\begin{split}
\sup_{y \in [[L_{Q_{k-1}}, R_{Q_{k-1}}]]}
P_{y} \Big( H^{-e_1}_{N_k/2}(Z) < H^{e_1}_{N_k}(Z) \Big)
&\le \sup_{y \in [[L_{Q_{k-1}}, R_{Q_{k-1}}]]}
\frac{P_y(D^-)}{P_y(D^+) + P_y(D^-)}\\
&\leq \sup_{y \in [[L_{Q_{k-1}}, R_{Q_{k-1}}]]} \frac{P_y(D^-)}{P_y(D^+)},
\end{split}
\end{align}
it will be useful to estimate the probabilities of the events $D^+$ and $D^-.$
Bearing in mind \eqref{eq:defTa} and using assumption \ref{item:standardAssumptions1} we obtain the upper bound
\begin{eqnarray}\label{eq:dplus}
P_{y}(D^+)\geq \big(1-\exp\{ -c_{k-1}''N_{k-1} \}  \big)^{3N_k/(2N_{k-2})}\kappa^{3c_1 N_{k-1}} \geq \frac12 \kappa^{3c_1 N_{k-1}},
\end{eqnarray}
while the strong Markov property in combination with one-dimensional random walk calculations
supplies us with
\begin{eqnarray}\label{eq:dminus}
P_{y}(D^-)\leq 3\big(  \exp\{ -c_{k-1}''N_{k-1} \} \big)^{\frac{L_{Q_{k-1}}+N_k/2}{N_{k-1}}}.
\end{eqnarray}
Plugging \eqref{eq:dplus} and \eqref{eq:dminus} into \eqref{eq:geomProbDec}, display
\eqref{eq:afta} follows.

Noting that for $x\in \widetilde B_k,$ on the event $\{ H_{\partial B_k}=H_{\partial_- B_k} \}$ we have
$P_{x,\omega}$-a.s. that ${H^{(Q_{k-1})} < H_{\partial_+B_k} \wedge H_{\partial_s B_k},}$ 
 we can now apply the strong Markov property
and \eqref{eq:afta} as well as \eqref{eq:befta} to obtain
\begin{align*}
\sup_{x\in \widetilde B_k} &P_{x,\omega}(H_{\partial B_k}=H_{\partial_- B_k})\\
&\leq
\sup_{x \in \widetilde{B}_k} P_{x,\omega}\big( H^{(Q_{k-1})} < H_{\partial_+B_k} \wedge H_{\partial_s B_k} \big)
\times
\sup_{y \in [[L_{Q_{k-1}}, R_{Q_{k-1}}]]}
P_{y} \big (H_{-N_k/2}(Z) < H_{N_k}(Z) \big)\\
&\leq \Big(\exp\{ -c_k'N_k \} + 3\exp \Big \{ -\Big (c_{k-1}''- \frac{\ln 2 }{N_{k-1}} \Big) (N_k-2N_{k-1}-R_{Q_{k-1}}) \Big\}\Big)\\
&\quad \times 6\kappa^{-3c_1 N_{k-1}} \big( 3 \exp \{ -c_{k-1}'' N_{k-1} \} \big)^{\frac{L_{Q_{k-1}}+N_k/2}{N_{k-1}}}\\
&\leq  \exp \Big\{ -N_k \Big( c_{k-1}'' -4\frac{\ln 3}{N_{k-1}} -
\frac{2N_{k-1}  + 3 N_{k-1}\ln 3  + 2 \ln 6 - 3c_1 N_{k-1} \ln \kappa}{N_k} \Big) \Big\},
\end{align*}
where we used $c_{k-1}'' \le 1,$
and \eqref{eq:backSideExitEst} follows for $k.$
\end{proof}

\subsection{Proof of Proposition \ref{prop:superPolDecay}} \label{subsec:propProof}
 
\begin{proof} [Proof of Proposition \ref{prop:superPolDecay}]
In order to apply our previous results, for $L> N_0$ given we implicitly define $k_L$ via
$
N_{k_L+1}+1 \geq L > N_{k_L}+1,
$
which provides us with
\begin{equation} \label{eq:kApprox}
k_L \sim \frac{\ln L}{\ln \ln L}, \quad \text{ as } L \to \infty.
\end{equation}
Furthermore, define the strip-like set
\begin{align*}
\mathcal{S}_L^l := \Big \{ x\in \Z^d : &-N_{k_L} \leq x \cdot l \leq 11N_{k_L+1},\\
&\text{and } \vert x \cdot l_j \vert \leq 3000 N_{k_L}^{3} (N_0+k_L-1)^2(N_0 + k_L)^2 \; \; \forall \; 2 \le j \le d \Big \}
\end{align*}
as well as 
\begin{align*}
\partial_+ \mathcal{S}_L^l := \Big \{ x\in \partial \mathcal{S}_L^l : &x \cdot l > 11N_{k_L+1}, \\
&\text{and } \vert x \cdot l_j \vert \leq 3000 N_{k_L}^{3} (N_0+k_L-1)^2(N_0 + k_L)^2 \; \; \forall \; 2 \le j \le d \Big \}
\end{align*}
For $L$ large enough, one has
\begin{align}
P_0 \big(H_{\partial \mathcal D_L^{l}} \ne H_{\partial_+ \mathcal D_L^{l}} \big)
&\leq P_0 \big( H_{\partial \mathcal{S}_{L}^l} \ne H_{\partial_+ \mathcal{S}_{L}^l} \big) \nonumber\\
\begin{split} \label{eq:sumDExitBd}
&\leq P_0 \left ( \text{All }B_{k_L} \in \mathfrak{B}_{k_L} \text{ intersecting } \mathcal{S}_{L}^l \text{ are good},
H_{\partial \mathcal{S}_{L}^l} \ne H_{\partial_+ \mathcal{S}_{L}^l} \right)\\
&\quad + P_0
\left( \text{There exists }B_{k_L} \in \mathfrak{B}_{k_L} \text{ intersecting } \mathcal{S}_{L}^l \text{ that is bad} \right).
\end{split}
\end{align}
For $L$ as above and using Proposition \ref{prop:likelygood}, the second summand of the above we estimate by
\begin{align*}
P_0 \Big( &\text{There exists }B_{k_L} \in \mathfrak{B}_{k_L} \text{ intersecting } \mathcal{S}_{L}^l \text{ that is bad} \Big)\\
&\leq 2 \big \vert \mathcal{L}_{N_{k_L-1}, N^{3}_{k_L}} \cap \mathcal{S}_{L}^l \big \vert \exp\{ -2^{k_L} \}\\
 &\leq 2\cdot 3000^d (N_0+k_L-1)^{3d}(N_0+k_L)^{3d} \exp\{ -2^{k_L} \}\\
& \leq \exp \big\{-L^\frac{(1+o(1))\ln 2}{\ln \ln L}\big\},
\end{align*}
using \eqref{eq:kApprox} in the last line.

We now bound the first summand of \eqref{eq:sumDExitBd}.
For that purpose, note that if the walk leaves ${100 (N_0 +k_L-1)^2 (N_0 +k_L)^2}$ blocks of $\mathfrak B_{k_L}$
consecutively through their
frontal boundary,
 then ${\{H_{\partial \mathcal{S}_{L}^l} = H_{\partial_+ \mathcal{S}_{L}^l} \}}$ occurs.
 Therefore, we can dominate the event
$\{H_{\partial \mathcal{S}_{L}^l} \ne H_{\partial_+ \mathcal{S}_{L}^l} \}$
from above by the event that one of the blocks $B_{k_L}$ of scale $k_L$ the walk encounters is left not through
$\partial_+ B_{k_L}.$ 
To make this formal, for each $k \in \N$ associate to $x \in \Z^d$ an element $\pi_{k}(x) \in \mathcal L_{N_{k-1}, N_{k}^3}$
such that
$x \in \widetilde B(\pi_{k}(x),k).$ Define the sequence of stopping times for $(X_n)$ given by
\begin{align*}
D^{k_L}_0 &:= 0,\\
D^{k_L}_j &:= \left\{
 \begin{array}{ll}
\inf \left\{ m \in \N \, : \, X_{m+ D^{k_L}_{j-1}} \notin  B \Big( \pi_{k_L} \big(X_{D^{k_L}_{j-1}} \big), k \Big) \right\} + D^{k_L}_{j-1},
&\text{for } j \ge 1\mbox{ if } D^{k_L}_{j-1}< \infty ,\\
\infty, &\mbox{otherwise.}
\end{array}
\right.
\end{align*}
Using this terminology and the strong Markov property at times $D^{k_L}_j,$ $j \in \N,$
we can upper bound the first summand of \eqref{eq:sumDExitBd} by
\begin{align*}
 P_0 \Big( &\text{All }B_{k_L} \in \mathfrak{B}_{k_L} \text{ intersecting } \mathcal{S}_{L}^l \text{ are good},
H_{\partial \mathcal{S}_{L}^l} \ne H_{\partial_+ \mathcal{S}_{L}^l} \Big)\\
&\leq \E \Big( P_{x,\omega}
\big( H_{\partial \mathcal{S}_{L}^l} \ne H_{\partial_+ \mathcal{S}_{L}^l} \big),
\text{all }B_{k_L} \in \mathfrak{B}_{k_L} \text{ intersecting } \mathcal{S}_{L}^l \text{ are good} \Big)\\
&\hspace{-1em} \leq \E \Bigg (
P_{0,\omega} \Big( \exists \;1 \le j \le 100 (N_0 +k_L-1)^2 (N_0 +k_L)^2 : 
  X_{D^{k_L}_j} \notin \partial_+ B \Big( \pi_{k_L} \big( X_{D^{k_L}_{j-1}} \big), k_L  \Big),\\
&\quad \text{all }B_{k_L} \in \mathfrak{B}_{k_L} \text{ intersecting } \mathcal{S}_{L}^l \text{ are good}  \Bigg)\\
&\leq 100 (N_0 +k_L-1)^2 (N_0 +k_L)^2 \exp\{ -cN_{k_L} \} \\
& \quad \times \P \big(\text{All }B_{k_L} \in \mathfrak{B}_{k_L} \text{ intersecting } \mathcal{S}_{L}^l \text{ are good} \big)\\
&\leq 100 (N_0 +k_L-1)^2 (N_0 +k_L)^2 \exp \{ -cN_{k_L} \}\\
&\leq \exp \big\{-L^\frac{(1+o(1))\ln 2}{\ln \ln L}\big\},
\end{align*}
where to obtain the second inequality we took advantage of Proposition \ref{prop:GoodBoxRightExit}.
This finishes the proof.
\end{proof}

\section{Proof of Proposition \ref{prop:quenchedExit} (Atypical quenched exit estimates)} \label{sec:proofPropQuenchedExit}
\begin{proof}[Proof of Proposition \ref{prop:quenchedExit}]
Let $l$ be as in the assumptions of Proposition \ref{prop:quenchedExit}
and $l_1, l_2, \ldots, l_d$ as below display \eqref{eq:piBot}.
Recall \eqref{eq:latticeType} and for each $n \in\mathbb N$ define
$$
\mathbb L_n := \mathcal L_{n, \frac{n^3\ln \ln n}{\ln n}}.
$$
In addition, for
each $x\in\mathbb L_n$ define the parallelograms
$$
\mathcal R_n(x):= \Big \{y\in\Z^d: -2n < (y-x)\cdot l < 2n, 
\vert (y-x) \cdot l_j \vert < 2\frac{n^3 \ln \ln n}{\ln n} \; \; \forall \; 2 \le j \le d \Big\},
$$
and their corresponding central parts
$$
\widetilde {\mathcal R}_n(x):=\Big \{y\in\mathbb Z^d: -n -1< (y-x)\cdot l < n+1, 
\vert (y-x) \cdot l_j \vert < \frac{n^3 \ln \ln n}{\ln n} +1 \; \; \forall \; 2 \le j \le d \Big\},
$$
as well as their frontal boundary parts
$$
\partial_+ \mathcal R_n(x) := \Big\{ y \in \partial \mathcal R_n(x) : (y-x) \cdot l \ge 2n,
\vert (y-x) \cdot l_j \vert < \frac{n^3 \ln \ln n}{\ln n} +1 \; \; \forall \; 2 \le j \le d \Big\},
$$
We chose the term ``parallelogram'' in order for the reader to
be able to distinguish this
setting more easily from that of the boxes in Section \ref{sec:renormalisation};
we do use the notation $\mathcal R_n,$ however, in order to better distinguish from the condition $\Pbox$ for which we already use the letter
$\mathcal P.$
We will denote by $\mathfrak P_n$ the set of parallelograms $\{\mathcal R_n(x):
x\in\mathbb L_n \}$. Denote by $J_{L,n}$ the number of parallelograms
in $\mathfrak P_n$ that intersect $B,$ i.e.,
$$
J_{L,n}:= \big \vert \{\mathcal R_n(x) \in \mathfrak P_n:\mathcal R_n(x) \cap B \ne\emptyset \} \big \vert.
$$
Due to Proposition \ref{prop:superPolDecay}, we obtain
\begin{equation} \label{eq:parExit}
 \sup_{y \in \widetilde {\mathcal R}_n(0)} P_y \big( H_{\partial \mathcal R_n(0)} \ne H_{\partial_+ \mathcal R_n(0)} \big)
 \le \exp \Big\{ - n^{\frac{(1+o(1)) \ln 2}{\ln \ln n}} \Big\},
\end{equation}
as $n \to \infty.$
The next step is to perform a one-step renormalisation involving parallelograms
$\mathcal R_n$ with $n:=\floor{L^{\epsilon(L)}}$.
 A
parallelogram $\mathcal R_n(x)\in \mathfrak P_n$ is defined to be {\it good (with respect to $\omega$)} if
$$
\inf_{y\in\widetilde {\mathcal R}_n(x)} P_{y,\omega} \big( H_{\partial \mathcal  R_n(x)} = H_{\partial_+ \mathcal R_n(x)} \big)
\ge 1-L^{-\epsilon(L)^{-1}}.
$$
Otherwise, $\mathcal R_n(x)$ is defined to be {\it bad (with respect to $\omega$)}. Note now that
by Markov's inequality and the invariance of $\P$ under translations of $\Z^d,$
\begin{align} \label{eq:badnessProbBd}
\begin{split}
\P(\mathcal R_n(x) \text{ is bad})
&=\P \left( \sup_{y\in \widetilde {\mathcal  R}_n(x)} P_{y,\omega} \big( {H_{\mathcal R_n(x)}}\not= H_{\partial_+ \mathcal R_n(x)} \big)
>L^{-\epsilon(L)^{-1}} \right)\\
&\le L^{3d\epsilon(L) +\epsilon(L)^{-1}}\sup_{y\in\widetilde {\mathcal R}_n(0)} 
P_y\big( H_{\mathcal R_n(0)} \ne H_{\partial_+ \mathcal R_n(0)} \big)\\
&\le L^{3d\epsilon(L)+\epsilon(L)^{-1} } \exp \Big \{ -L^{\frac{(1+o(1))\epsilon(L) \ln 2}{\ln\ln L^{\epsilon(L)}}} \Big \}\\
&\le  \exp \Big \{ \big(3d \epsilon(L) + \epsilon(L)^{-1} \big) \ln L - L^{\frac{(1+o(1))\ln 2}{(\ln\ln L)^3}} \Big \},
\end{split}
\end{align}
where the second inequality follows from \eqref{eq:parExit}.
Next, we consider the event $G_{\beta, L} \subset \Omega$ defined via
$$
G_{\beta,L}:=\Big \{\text{the number of  bad  parallelograms in } \mathfrak P_{n}
\text{ that intersect } B \text{ is less than } L^{\beta} \Big\}.
$$
A crude strategy for $X$ starting in $B$ to exit $B$ through $\partial_+ B,$ is to exit all $\mathcal R_n(x)$'s encountered through
$\partial_+ \mathcal R_n(x).$
To make this formal, for each $n \in \N$ associate to $x \in \Z^d$ one of the elements $y \in \mathbb L_n$ 
such that $x \in \widetilde R_n(y),$ and denote this element by $\pi_n(x).$ 
In a fashion reminiscent of the end of Subsection \ref{subsec:propProof}, we
define the sequence of stopping times for $(X_n)$ given by
\begin{align*}
D^n_0 &:= 0,\\
D^n_j &:= \left\{
 \begin{array}{ll}
\inf \big\{ k \in \N \, : \, X_{k+ D^n_{j-1}} \notin \mathcal R_n(\pi_k(X_{D^n_{j-1}})) \big\} + D^n_{j-1},
&\text{for } j \ge 1\mbox{ if } D^n_{j-1}< \infty ,\\
\infty, &\mbox{otherwise.}
\end{array}
\right.
\end{align*}
Note that following the above crude strategy,
the number of
bad parallelograms of type $\mathcal R_n(x)$ encountered by the random walk is at
most $L^{\beta(L)}.$
Thus, using the strong Markov property and
\ref{item:standardAssumptions1}
 we observe that
for $\omega\in G_{\beta(L),L}$, 
\begin{align}\label{eq:behaviorG}
\begin{split}
 P_{0,\omega} \big (H_{\partial B} = H_{\partial_+ B} \big)
&\ge  P_{0,\omega} \left( X_{D^n_j} \in \partial_+ \mathcal R_n \big( \pi_n \big (X_{D^n_{j-1}} \big ) \big), \; 
\forall \;1 \le j \le \Big \lceil \frac{L}{\floor{L^{\epsilon(L)}}} \Big \rceil \right) \\
 &\ge\Big( \exp\{-c_1L^{ \epsilon(L)} \big\} \Big)^{L^{\beta(L)}} \big (1-L^{-\epsilon(L)^{-1}} \big)^L\\
&> \frac{1}{2} \exp \big\{ -c_1 L^{\epsilon(L) + \beta(L)} \big\},
\end{split}
\end{align}
for all $L$ large enough, and
where $c_1$ has been defined in \eqref{eq:c4Def}.
On the other hand, one has that
\begin{equation}
\label{probabilityG}
\P \big( G_{\beta(L), L}^c \big)\le 5^d \frac{e}{ \lceil L^{\beta(L)}/5^d \rceil!}.
\end{equation} 
 Indeed, writing $N_L$ for the number
of bad parallelograms in $\mathfrak P_{n}$ that intersect $B,$
we get ${G_{\beta(L),L}^c = \{N_L \geq L^{\beta(L)}\}.}$ 

We now take advantage of the following claim, the proof of which we omit.
\begin{claim}
 There exist subsets $\mathbb L_n^1, \ldots, \mathbb L_n^{5^d}$ of $\mathbb L_n$ such that
 \begin{itemize}
  \item 
  $$
  \bigcup_{j=1}^{5^d} \mathbb L_n^j = \mathbb L_n,
  $$
  and
  \item
  for all $j \in \{1, \ldots, 5^d\}$ and all $x,y \in \mathbb L_n^j$ one has
  $$
  \mathcal R_n(x) \cap \mathcal R_n(y) = \emptyset.
  $$
   \end{itemize}
\end{claim}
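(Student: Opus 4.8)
The plan is to obtain the sets $\mathbb L_n^1,\ldots,\mathbb L_n^{5^d}$ by an explicit colouring of $\mathbb L_n$ matched to the two scales appearing in the definition of $\mathcal R_n$. Recall that each $x\in\mathbb L_n=\mathcal L_{n,\,n^3\ln\ln n/\ln n}$ can be written as $x=\floor{\sum_{k=1}^d j_k(x)\,l_k}$ with $j_1(x)\in n\Z$ and $j_k(x)\in\frac{n^3\ln\ln n}{\ln n}\Z$ for $2\le k\le d$; I would fix one such tuple $(j_1(x),\ldots,j_d(x))$ for every $x$ and assign to $x$ the colour
$$
\chi(x):=\Big(\tfrac{j_1(x)}{n}\bmod 5,\ \tfrac{j_2(x)\ln n}{n^3\ln\ln n}\bmod 5,\ \ldots,\ \tfrac{j_d(x)\ln n}{n^3\ln\ln n}\bmod 5\Big)\in\{0,1,2,3,4\}^d,
$$
and declare $\mathbb L_n^1,\ldots,\mathbb L_n^{5^d}$ to be the (re-indexed) level sets of $\chi$. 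The first required property is then immediate, since every point of $\mathbb L_n$ receives a colour.

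For the disjointness property it suffices to show that two \emph{distinct} points of a common colour class index disjoint parallelograms. So I would assume $x\ne y$ lie in the same $\mathbb L_n^i$ and that some $z$ belongs to $\mathcal R_n(x)\cap\mathcal R_n(y)$. Subtracting the defining inequalities for $z$ in $\mathcal R_n(x)$ and in $\mathcal R_n(y)$ gives $|(x-y)\cdot l|<4n$ and $|(x-y)\cdot l_m|<4\frac{n^3\ln\ln n}{\ln n}$ for $2\le m\le d$. On the other hand, since $l_1,\ldots,l_d$ is orthonormal one has $l_k\cdot l_m=\delta_{km}$, and writing $a:=x-\sum_k j_k(x)l_k$ and $b:=y-\sum_k j_k(y)l_k$ — vectors each of whose coordinates lies in $(-1,0]$ — one gets $|a\cdot l_m|,|b\cdot l_m|\le\|l_m\|_1\le\sqrt d$ and hence $|(x-y)\cdot l_m-(j_m(x)-j_m(y))|=|(a-b)\cdot l_m|\le 2\sqrt d$ for every $m$.

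Combining these bounds and using that $n$ is large (in the relevant application $n=\floor{L^{\epsilon(L)}}\to\infty$, so certainly $n>2\sqrt d$), I would conclude $|j_1(x)-j_1(y)|<4n+2\sqrt d<5n$ and $|j_m(x)-j_m(y)|<5\frac{n^3\ln\ln n}{\ln n}$ for $m\ge2$. But $\chi(x)=\chi(y)$ means $j_1(x)\equiv j_1(y)\ (\mathrm{mod}\ 5n)$ and $j_m(x)\equiv j_m(y)\ (\mathrm{mod}\ 5n^3\ln\ln n/\ln n)$ for $m\ge2$; together with the strict inequalities just derived this forces $j_m(x)=j_m(y)$ for all $m$, whence $x=\floor{\sum_k j_k(x)l_k}=\floor{\sum_k j_k(y)l_k}=y$, contradicting $x\ne y$. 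Thus $\mathcal R_n(x)\cap\mathcal R_n(y)=\emptyset$, which completes the proof.

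The only delicate point — and I do not expect any real obstacle beyond it — is the coordinatewise floor in the definition of $\mathcal L_{n_1,n_2}$: it displaces lattice points from the ideal positions $\sum_k j_k l_k$ by a vector of sup-norm at most one, producing the $O(\sqrt d)$ slack above, which is precisely why one takes five colours per coordinate rather than the four that would suffice for the unperturbed lattice.
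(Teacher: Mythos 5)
The paper explicitly omits the proof of this claim, so there is nothing of the authors' to compare against; your argument is correct and complete. The period-$5$ colouring in each of the $d$ lattice coordinates of $\mathbb L_n$ correctly accounts for both the overlap window (width $4n$ along $l_1$, width $4n^3\ln\ln n/\ln n$ along $l_m$, $m\ge 2$) and the additional $O(\sqrt d)$ slack coming from the coordinatewise floor in the definition of $\mathcal L_{n_1,n_2}$, and is precisely the elementary argument the authors presumably had in mind when they wrote ``the proof of which we omit.''
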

This claim in combination with the assumption \ref{item:IIDassumption}
yields that $N_L$ can be stochastically dominated by $\sum_{j=1}^{5^d}N_L^j,$ where the $N_L^j,$ $1 \le j \le 5^d,$
 are independent identically distributed
binomial random variables (defined on some probability space with probability measure $P$) with parameters 
$J_{L,n}$ and $\P(\mathcal R_n(0) \text{ is bad}),$ where, in particular,
\begin{equation}
\label{lattice}
J_{L,n} \leq C L^{d}
\end{equation}
for some constant $C$ and all $L$.

Next, note that for any binomially distributed random variable with expectation $1$, i.e. of the type $Y_n \sim \text{Bin}(n,n^{-1})$,
we have
for all $n \in \N$ and $0 \le k \le n$ that 
$$
P(Y_n \ge k) \le \frac{e}{k!}.
$$
Indeed, we compute
\begin{align*}
 P(Y_n \ge k)
&\le \sum_{j=k}^n {n \choose j} n^{-j}
\le \sum_{j=k}^n \frac{1}{j!} \le \frac{e}{k!}.
\end{align*}
Now due to \eqref{eq:badnessProbBd} and \eqref{lattice},
for $L$ large enough, the $N_L^j$ are stochastically dominated by binomial random variables of the type 
$Y_n.$ Thus, we obtain that
\begin{align} \label{Binomial}
\begin{split}
\P \big( N_L \geq L^{\beta(L)} \big)
&\le P \left( \sum_{j=1}^{5^d} N_L^j \geq L^{\beta(L)} \right)
\leq 5^d \frac{e}{ \lceil L^{\beta(L)}/5^d \rceil!}
\end{split}
\end{align}
Hence,
inequality (\ref{probabilityG}) follows and
 combining \eqref{eq:behaviorG} with (\ref{probabilityG}), we
finish the proof of Proposition \ref{prop:quenchedExit}.
\end{proof}

        

\appendix
\section{}




 \ack 

A.F. Ram\'irez thanks David Campos for useful discussions.

N. Berger has been partially supported by ERC StG grant 239990,
A. Drewitz by an ETH Fellowship and the Edmund Landau Minerva Center for Research in Mathematical Analysis and Related Areas,
and A.F. Ram\'\i rez by Fondo Nacional de Desarrollo Cient\'\i fico
y Tecnol\'ogico grant 1100298.

\frenchspacing
\bibliographystyle{plain}

\end{document}